\documentclass[11pt,a4paper,fleqn]{amsart}
\usepackage{a4wide,amsfonts,amsmath,latexsym,amssymb,euscript,graphicx,units,mathrsfs}

\usepackage{graphicx}
\usepackage{color}
\usepackage{amssymb}
\usepackage{amssymb}
\usepackage[T1]{fontenc}
\usepackage{latexsym}
\usepackage{xypic}
\usepackage{eufrak}
\usepackage{euscript}
\usepackage{amsfonts,amsmath}
\usepackage{verbatim}
\usepackage{fancyhdr}
\usepackage[english]{babel}
\usepackage{mathrsfs}
\usepackage{units}

\newtheorem{prop}{Proposition}[section]
\newtheorem{theorem*}{Theorem}

\newtheorem{lemma}[prop]{Lemma}
\newtheorem{rem}[prop]{Remark}

\newtheorem{remark}[prop]{Remark}
\newtheorem{theorem}[prop]{Theorem}

\newtheorem*{prop-cont}{Proposition \ref{azerty}}
\newtheorem*{theorem-cont}{Theorem \ref{meckes}}

\renewcommand{\geq}{\geqslant}
\def\leq{\leqslant}

\newcommand{\N}{\mathbb{N}}

\newcommand{\R}{\mathbb{R}}

\def\EE{\mathscr{E}}

\def\1{{\mathbf{1}}}

\def\1{{\mathbf{1}}}
\def\0.5{{\frac{1}{2}}}

\begin{document}

\begin{center}
{\Large{\bf Exchangeable pairs on Wiener chaos}}\\~\\
Ivan Nourdin and Guangqu Zheng \\
{\it Universit\'e du Luxembourg}\\~\\
\end{center}
{\small \noindent {\bf Abstract:} 
In  \cite{nourdinpeccati2009},  Nourdin and Peccati combined the Malliavin calculus  and Stein's method of normal approximation  to associate a rate of convergence to the celebrated fourth moment theorem \cite{FMT} of Nualart and Peccati. Their analysis, known as the Malliavin-Stein method nowadays,   has found many  applications towards stochastic geometry, statistical physics and  zeros of random polynomials, to name a few.  In this article, we further explore the relation between these two fields of mathematics.  In particular, we construct  exchangeable pairs of Brownian motions and we discover a natural link between Malliavin operators and these exchangeable pairs.  By combining our findings with E. Meckes' infinitesimal version of exchangeable pairs, we can give another proof of the quantitative fourth moment theorem. Finally, we extend our result to the multidimensional case.
 \\

\noindent {\bf Key words:} Stein's method;  Exchangeable pairs; Brownian motion; Malliavin calculus.\\

\bigskip

\begin{minipage}[c]{0.04\linewidth}

\end{minipage} \hfill
\begin{minipage}[c]{0.92\linewidth}
{\it Dedicated to the memory of Charles Stein, in remembrance of his beautiful mind and of his inspiring, creative, very original and deep mathematical ideas, which will, for sure, survive him  for a long time.}
\end{minipage}
\begin{minipage}[c]{0.04\linewidth}

\end{minipage} \hfill
\bigskip

\section{Introduction}

At the beginning of the 1970s, Charles Stein, one of the most famous statisticians of the time, introduced in \cite{Stein72} a new revolutionary method for establishing probabilistic approximations (now known as {\it Stein's method}), which is based on the breakthrough application of characterizing differential operators.   The impact of Stein's method and its ramifications during the last 40 years is immense (see for instance the monograph \cite{CGS}), and touches fields as diverse as combinatorics, statistics, concentration and functional inequalities, as well as mathematical physics and random matrix theory.

 Introduced by Paul Malliavin \cite{Malliavin76}, {\it Malliavin calculus} can be roughly described as an infinite-dimensional differential calculus whose operators act on sets of random objects associated with Gaussian or more general noises. 
In 2009, Nourdin and Peccati \cite{nourdinpeccati2009} combined the Malliavin calculus  and Stein's method   for the first time, thus virtually creating a new domain of research, which is now commonly known as the {\it Malliavin-Stein method}. 
  The success of their method relies crucially  on  the existence of integration-by-parts formulae on both sides:  on one side, the Stein's lemma is built on the Gaussian integration-by-parts formula and it is one of the cornerstones of the Stein's method; on the other side, the integration-by-parts formula on Gaussian space is one of the main tools in Malliavin calculus.   Interested readers can refer to the constantly updated website \cite{website} and the monograph \cite{bluebook} for a detailed overview of this active field of research.
  
A prominent example of applying Malliavin-Stein method is the obtention (see also (\ref{nourdinpeccati}) below) of a Berry-Esseen's type rate of convergence associated to the celebrated fourth moment theorem \cite{FMT} of Nualart and Peccati, according to which a standardized sequence of multiple Wiener-It\^o integrals  converges in law to a standard Gaussian random variable if and only if its fourth moment converges to $3$. 

\begin{theorem}\label{np-np}
\begin{enumerate}
\item[(i)] {\rm (Nualart, Peccati \cite{FMT})} Let $(F_n)$ be a sequence of multiple Wiener-It\^o integrals of order $p$, for some fixed $p\geq 1$. Assume that $E[F_n^2]\to \sigma^2>0$ as $n\to\infty$. Then, as $n\to\infty$, we have the following equivalence:
\begin{eqnarray*}  
F_n\overset{\rm law}{\to} N(0,\sigma^2)\quad \Longleftrightarrow\quad E[F_n^4]\to 3\sigma^4.
\end{eqnarray*}
\item[(ii)] {\rm (Nourdin, Peccati \cite{nourdinpeccati2009,bluebook})} Let $F$ be any multiple Wiener-It\^o integral of order $p\geq 1$, such that $E[F^2]= \sigma^2>0$. Then, with  $N\sim N(0,\sigma^2)$ and $d_{TV}$ standing for the total variation distance,
\begin{eqnarray*} 
d_{TV}(F, N)\leq  \frac{2}{\sigma^2}\sqrt{\frac{p-1}{3p}}\, \sqrt{E[F^4]-3\sigma^4}.
\end{eqnarray*}
\end{enumerate}
\end{theorem}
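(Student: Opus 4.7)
The plan is to establish the quantitative bound (ii) via the classical Malliavin--Stein method and then derive (i) from it. Write $F = I_p(f)$ for a symmetric $f \in \H^{\odot p}$, and let $D$, $\delta$, and $L = -\delta D$ denote the Malliavin derivative, the divergence, and the Ornstein--Uhlenbeck generator, respectively.

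The first step is the general Stein--Malliavin bound. The Stein equation associated to $N \sim N(0,\sigma^2)$ reduces $d_{TV}(F,N)$ to controlling $|E[\sigma^2 g'(F) - F g(F)]|$ over solutions $g$ satisfying $\|g'\|_\infty \leq 2/\sigma^2$. Writing $F = -\delta D L^{-1}F$ and invoking the integration-by-parts duality $E[G\,\delta u] = E[\lpa DG, u\rpa_\H]$ gives $E[F g(F)] = E[g'(F)\lpa DF, -DL^{-1}F\rpa_\H]$, whence
\[
d_{TV}(F,N) \leq \frac{2}{\sigma^2}\, E\bigl|\sigma^2 - \lpa DF, -DL^{-1}F\rpa_\H\bigr|.
\]
The second step exploits the chaos structure: because $F$ lies in the $p$-th Wiener chaos, $LF = -pF$ and hence $-DL^{-1}F = \frac{1}{p}DF$. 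Consequently $\lpa DF, -DL^{-1}F\rpa_\H = \frac{1}{p}\|DF\|_\H^2$, which has mean exactly $\sigma^2$ (via $E\|DF\|_\H^2 = -E[F\cdot LF] = pE[F^2]$). Applying Cauchy--Schwarz then yields
\[
d_{TV}(F,N) \leq \frac{2}{\sigma^2}\sqrt{\var\bigl(\tfrac{1}{p}\|DF\|_\H^2\bigr)}.
\]

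The third step, which I expect to be the main technical obstacle, is the sharp inequality
\[
\var\bigl(\tfrac{1}{p}\|DF\|_\H^2\bigr) \leq \frac{p-1}{3p}\bigl(E[F^4] - 3\sigma^4\bigr).
\]
To prove it, I would apply the multiplication formula for multiple Wiener--It\^o integrals to expand both $\|DF\|_\H^2$ and $F^2$ into sums of multiple integrals indexed by the contractions $f \tilde\otimes_r f$, then use the isometry between chaoses to re-express each side as an explicit nonnegative linear combination of the squared norms $\|f\tilde\otimes_r f\|_\H^2$ for $1 \leq r \leq p-1$. A careful term-by-term comparison of the resulting combinatorial coefficients produces the factor $(p-1)/(3p)$. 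Substituting into the previous display yields (ii).

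Part (i) follows: the direction $E[F_n^4] \to 3\sigma^4 \Rightarrow F_n \overset{\rm law}{\to} N(0,\sigma^2)$ is immediate from (ii), since $E[F_n^2] \to \sigma^2$ may be absorbed by standardization. The converse uses hypercontractivity on the $p$-th chaos, namely $\|I_p(f)\|_{L^4} \leq 3^{p/2}\|I_p(f)\|_{L^2}$, which makes $(F_n^4)$ uniformly integrable; combined with $F_n \overset{\rm law}{\to} N(0,\sigma^2)$, this forces $E[F_n^4] \to E[N^4] = 3\sigma^4$.
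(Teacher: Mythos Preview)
Your proof is correct, but it takes a genuinely different route from the paper. You reproduce the original Malliavin--Stein argument of Nourdin--Peccati: Stein's equation combined with the integration-by-parts identity $E[Fg(F)]=E[g'(F)\langle DF,-DL^{-1}F\rangle_\H]$, then the chaos relation $-DL^{-1}F=\tfrac{1}{p}DF$, and finally the variance inequality for $\tfrac{1}{p}\|DF\|_\H^2$. The paper's whole purpose, however, is to \emph{avoid} the Malliavin operators $D$, $\delta$, $L$ altogether. Instead it builds an exchangeable pair $(B,B^t)$ of Brownian motions, shows that $F_t=I_p^{B^t}(f)$ satisfies the hypotheses of Meckes' infinitesimal exchangeable-pairs theorem, and obtains directly
\[
d_{TV}(F,N)\le \frac{2}{\sigma^2}\,E\Bigl[\Bigl|\,p\!\int_0^1 I^B_{p-1}(f(x,\cdot))^2\,dx-\sigma^2\Bigr|\Bigr].
\]
This is the same quantity you reach (since $\tfrac{1}{p}\|DF\|_\H^2=p\int_0^1 I^B_{p-1}(f(x,\cdot))^2\,dx$), and from here both proofs finish with the identical product-formula computation bounding the variance by $\tfrac{p-1}{3p}(E[F^4]-3\sigma^4)$. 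What your approach buys is brevity and directness for readers fluent in Malliavin calculus; what the paper's approach buys is accessibility, needing only the product formula and elementary facts about multiple integrals, which is precisely the motivation stated in the introduction.
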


Of course, (ii) was obtained several years after (i), and  (ii) implies `$\Leftarrow$' in (i).
Nualart and Peccati's fourth moment theorem has been the starting point of a number of applications and generalizations by dozens of authors. These collective efforts have allowed one to break several long-standing deadlocks in several domains, ranging from stochastic geometry (see \emph{e.g.} \cite{HLS16, RS2013, Schulte12}) to  statistical physics (see \emph{e.g.} \cite{MP101, MPRW16,MR15}), and zeros of random polynomials (see \emph{e.g.} \cite{ADM16, AL2013, Dalmao15}),  to name a few.  At the time of writing, more than  two hundred papers have been written, which use in one way or the other the Malliavin-Stein method  (see again the webpage  \cite{website}).

Malliavin-Stein method has become a popular tool, especially within the Malliavin calculus community.
Nevertheless, and despite its success, it is less used by researchers who are not specialists of the Malliavin calculus. A possible explanation is that it requires a certain investment before one is in a position to be able to use it, and doing this investment may refrain people who are not originally trained in the Gaussian analysis.
This paper takes its root   from this observation.

During our attempt to make the proof of Theorem \ref{np-np}(ii)  more accessible to readers having no  background on Malliavin calculus , we discover the following interesting fact for exchangeable pairs of multiple Wiener-It\^o integrals. When $p\geq 1$ is an integer and  $f$ belongs to $L^2([0,1]^p)$,
we write $I_p^B(f)$ to indicate the multiple Wiener-It\^o integral of $f$ with respect to Brownian motion $B$, see Section \ref{sec1} for the precise meaning.
\begin{prop}\label{azerty}
Let $(B,B^t)_{t\geq 0}$ be a family of exchangeable pairs of Brownian motions (that is, $B$ is a Brownian motion on $[0,1]$ and, for each $t$, one has $(B,B^t)\overset{\rm law}{=}(B^t,B)$). 
Assume moreover that
\begin{enumerate}
\item[(a)] for any integer $p\geq 1$ and any  $f\in L^2([0,1]^p)$,
 \begin{eqnarray*}
 \lim_{t\downarrow 0} \frac1t \, E\Big[ I_p^{B^t}(f) - I_p^B(f) \big| \sigma\{B\} \Big] =  -p\,I_p^{B}(f)\quad\mbox{in $L^2(\Omega)$}.
 \end{eqnarray*}
  \end{enumerate}
Then, for any integer $p\geq 1$ and any  $f\in L^2([0,1]^p)$,
\begin{enumerate}
      \item[(b)]
        ${\displaystyle \lim_{t\downarrow 0}   \frac{1}{t}\, E\Big[ \big( I_p^{B^t}(f) - I_p^{B}(f) \big)^2 | \sigma\{B\} \Big] =    2  p^2\int_0^1 I^B_{p-1}(f(x,\cdot))^2dx }$ \quad   in $L^2(\Omega)$;
        \item[(c)]
      ${\displaystyle \lim_{t\downarrow 0} \frac{1}{t}\, E\Big[ \big( I_p^{B^t}(f) - I_p^{B}(f) \big)^4  \Big]  =   0}$.
          \end{enumerate}
\end{prop}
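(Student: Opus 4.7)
My plan is to derive (b) and (c) by Taylor-expanding in $t$ via the product formula for multiple Wiener--It\^o integrals and feeding hypothesis (a) termwise into the resulting chaos decomposition. Writing $F = I_p^B(f)$ and $F^t = I_p^{B^t}(f)$, the product formula $(F^t)^2 = \sum_{r=0}^p r!\binom{p}{r}^2 I_{2p-2r}^{B^t}(f\tilde\otimes_r f)$ combined with (a) applied to each summand (and to $F^t$ itself) will give, in $L^2(\Omega)$, the two basic expansions
\begin{equation*}
E\big[F^t \mid \sigma\{B\}\big] = F - pt\,F + o(t),\qquad E\big[(F^t)^2 \mid \sigma\{B\}\big] = F^2 + t\,LF^2 + o(t),
\end{equation*}
where $L$ denotes the Ornstein--Uhlenbeck generator acting on the $q$-th Wiener chaos of $B$ as multiplication by $-q$. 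These two expansions are essentially the only analytic input beyond standard Malliavin calculus identities.

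Part (b) should then follow immediately: expanding $E\big[(F^t-F)^2\mid\sigma\{B\}\big] = E\big[(F^t)^2\mid\sigma\{B\}\big] - 2F\,E[F^t\mid\sigma\{B\}] + F^2$, the $O(1)$ terms cancel and the coefficient of $t$ equals $LF^2 + 2pF^2$. The carr\'e du champs identity $LF^2 - 2FLF = 2\langle DF,DF\rangle_\H$ combined with $LF = -pF$ rewrites this coefficient as $2\int_0^1(D_xF)^2\,dx$, and the well-known formula $D_xF = p\,I_{p-1}^B(f(x,\cdot))$ then identifies the limit as the desired $2p^2\int_0^1 I_{p-1}^B(f(x,\cdot))^2\,dx$.

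The hard part will be (c). By exchangeability the fourth moment reduces to $E[(F^t-F)^4] = 2E[F^4] - 8E[F^3F^t] + 6E[F^2(F^t)^2]$, and feeding the two expansions above into the right-hand side yields $E[(F^t-F)^4] = t\bigl(8p\,E[F^4] + 6\,E[F^2 LF^2]\bigr) + o(t)$; the whole point is therefore to show that the bracket vanishes exactly. I expect this cancellation to follow from two applications of the Malliavin duality $E[H\,LK] = -E\langle DH,DK\rangle_\H$, combined with the Leibniz rules $D(F^2) = 2F\,DF$ and $D(F^3) = 3F^2\,DF$, giving
\begin{equation*}
E[F^2 LF^2] = -4\,E[F^2\langle DF,DF\rangle_\H] = -\tfrac{4}{3}E\langle D(F^3),DF\rangle_\H = \tfrac{4}{3}E[F^3 LF] = -\tfrac{4p}{3}E[F^4],
\end{equation*}
which cancels $8p\,E[F^4]$ exactly. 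The only mild technical point throughout will be to verify that the $L^2$-valued $o(t)$ remainders in (a) remain $o(t)$ after multiplying by powers of $F$ and taking expectations; this follows from the hypercontractivity of Wiener chaos, which ensures finite $L^q$ norms for $F$, $F^t$ and every contraction $f\tilde\otimes_r f$.
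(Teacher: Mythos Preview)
Your argument for (b) is essentially the paper's, recast in the language of the Ornstein--Uhlenbeck generator $L$ and the carr\'e du champ: the paper expands $(F_t-F)^2$ the same way, applies (a) chaos-by-chaos to $F_t^2$ via the product formula, and identifies the resulting combination with $2p^2\int_0^1 I_{p-1}^B(f(x,\cdot))^2\,dx$ directly from the product formula rather than via $L(F^2)-2FLF=2\|DF\|^2$. No substantive difference.

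For (c) the two arguments genuinely diverge. The paper also reaches the expression
\[
\lim_{t\downarrow 0}\frac1t\,E\big[(F_t-F)^4\big]=-4p\,E[F^4]+12p^2\,E\Big[F^2\!\int_0^1 I_{p-1}^B(f(x,\cdot))^2\,dx\Big],
\]
but then, instead of evaluating it algebraically, observes that this limit depends only on $B$ (not on the particular pair), switches to the explicit Mehler pair $B^t=e^{-t}B+\sqrt{1-e^{-2t}}\widehat B$, notes that for that pair $F_t-F$ is a $p$-th multiple integral with respect to a two-sided Brownian motion, and invokes hypercontractivity $E[(F_t-F)^4]\le c_p\,E[(F_t-F)^2]^2$ together with (b) to force the limit to $0$. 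Your route---two Malliavin integrations by parts giving $E[F^2LF^2]=-\tfrac{4p}{3}E[F^4]$ and hence exact cancellation---is cleaner and avoids any appeal to a specific construction; in fact the paper records the resulting identity $\tfrac13(E[F^4]-3\sigma^4)=E[F^2(p\!\int I_{p-1}^B(f(x,\cdot))^2dx-\sigma^2)]$ only as a \emph{byproduct} of the vanishing, remarking that it ``was originally obtained by chain rule''---which is precisely what you do. The price is that your proof leans on the operators $D,\delta,L$ and their duality, whereas the paper's explicit aim is an argument accessible to readers without Malliavin calculus; its detour through the concrete pair and elementary hypercontractivity is designed exactly to sidestep the integration-by-parts you use.

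One small caution: for (b) you need $F\cdot o(t)$ to remain $o(t)$ in $L^2(\Omega)$, not merely after taking expectations; this asks slightly more than finite $L^q$ norms of $F$ (it needs the remainder itself controlled in $L^4$). The paper glosses over the same point, and for both concrete pairs constructed there the stronger convergence holds, so this is not a defect of your strategy relative to the paper's.
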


Why is this proposition interesting? Because, as it turns out, it combines perfectly well with the following result, which represents the main ingredient from Stein's method we will rely on and which corresponds to a slight modification of a theorem originally due to  Elizabeth Meckes (see \cite[Theorem 2.1]{Meckes06}).
\begin{theorem}[Meckes \cite{Meckes06}]\label{meckes}
Let $F$ and a family of  random variables $(F_t)_{t\geq 0}$   be defined on a common probability space $(\Omega,\mathcal{F},P)$ such that $F_t\overset{law}{=} F$ for every $t\geq 0$. Assume that $F\in L^3(\Omega, \mathscr{G}, P)$ for some $\sigma$-algebra $\mathscr{G}\subset\mathcal{F}$  and that in $L^1(\Omega)$,
\begin{enumerate}
\item[(a)] ${\displaystyle \lim_{t\downarrow 0} \frac1t\,E[F_t-F|\mathscr{G}] = -\lambda\,F}$ for some $\lambda>0$,
\item[(b)] ${\displaystyle   \lim_{t\downarrow 0} \frac1t\,E[(F_t-F)^2|\mathscr{G}] = (2\lambda+S){\rm Var}(F) }$ for some random variable $S$,
\item[(c)] ${\displaystyle  \lim_{t\downarrow 0} \frac1t\,(F_t-F)^3=0}$.
\end{enumerate}
Then, with $N\sim N(0,{\rm Var}(F))$, 
\begin{eqnarray*}
d_{TV}(F,N)\leq \frac{E|S|}{\lambda}.
\end{eqnarray*} 
\end{theorem}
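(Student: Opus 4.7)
The plan is to use the generator-type identity that falls out of the assumption $F_t \stackrel{\rm law}{=} F$, combined with the Stein equation for the $N(0,\sigma^2)$ distribution, where $\sigma^2 = \mathrm{Var}(F)$. Fix a sufficiently smooth test function $\phi$ (with $\phi'$, $\phi''$, $\phi'''$ bounded). Since $F_t$ and $F$ have the same distribution, $E[\phi(F_t)-\phi(F)]=0$. Taylor expanding gives
\begin{equation*}
\phi(F_t)-\phi(F) \;=\; \phi'(F)\,(F_t-F) + \tfrac{1}{2}\phi''(F)\,(F_t-F)^2 + R_t,
\end{equation*}
where $|R_t|\leq \tfrac{1}{6}\|\phi'''\|_\infty |F_t-F|^3$.

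The next step is to take expectation, divide by $t$, and let $t\downarrow 0$. Because $F$ is $\mathscr{G}$-measurable, I condition on $\mathscr{G}$ inside each term and then apply the three hypotheses. Hypothesis (a) turns $t^{-1}E[\phi'(F)(F_t-F)]$ into $-\lambda\,E[F\phi'(F)]$; hypothesis (b) turns $t^{-1}\cdot \tfrac{1}{2}E[\phi''(F)(F_t-F)^2]$ into $\lambda\,\mathrm{Var}(F)\,E[\phi''(F)] + \tfrac{1}{2}\mathrm{Var}(F)\,E[S\,\phi''(F)]$; and hypothesis (c) forces $t^{-1}E[R_t]\to 0$ thanks to the bound on $\|\phi'''\|_\infty$. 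Rearranging yields the key identity
\begin{equation*}
E\bigl[F\phi'(F)-\mathrm{Var}(F)\,\phi''(F)\bigr] \;=\; \frac{\mathrm{Var}(F)}{2\lambda}\,E\bigl[S\,\phi''(F)\bigr].
\end{equation*}

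Now I plug in the Stein solution. For a test function $h$ (initially smooth, eventually an indicator), let $f_h$ solve the Stein equation $\sigma^2 f_h'(x)-x f_h(x) = h(x) - E[h(N)]$, and set $\phi'=f_h$, $\phi''=f_h'$. Substituting into the identity above gives
\begin{equation*}
E[h(F)] - E[h(N)] \;=\; -\,\frac{\mathrm{Var}(F)}{2\lambda}\,E\bigl[S\,f_h'(F)\bigr],
\end{equation*}
which leads to the bound $|E[h(F)]-E[h(N)]| \leq \tfrac{\sigma^2}{2\lambda}\|f_h'\|_\infty E|S|$. Taking the supremum over indicators and invoking the standard Stein bound $\|f_h'\|_\infty \leq 2/\sigma^2$ for $h=\mathbf{1}_A$ produces $d_{TV}(F,N)\leq E|S|/\lambda$.

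The delicate point is not the algebra but the regularity issue: the derivation above uses Taylor to third order, which requires $\|\phi'''\|_\infty<\infty$, whereas the Stein solution $f_h$ associated to an indicator $h=\mathbf{1}_A$ is typically only piecewise $C^1$, so $f_h'$ is not Lipschitz. The main obstacle is therefore to justify the passage from smooth test functions to indicators. I would handle this by first proving the identity for $h\in C_c^\infty$, then approximating a general indicator by mollification (convolution of $\mathbf{1}_A$ with a Gaussian of shrinking variance), using the $L^3$ integrability of $F$ to dominate the third-order remainder uniformly along the approximation, and finally letting the smoothing parameter tend to zero. This approximation is precisely the subtlety addressed in Meckes' original argument, and it is the step where the assumption $F\in L^3(\Omega,\mathscr{G},P)$ plays its essential role.
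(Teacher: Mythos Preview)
Your argument is correct and follows the same overall strategy as the paper's proof: exploit $E[\phi(F_t)-\phi(F)]=0$, Taylor expand to second order, pass to the limit using (a), (b), (c), and plug the result into the Stein equation.

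The only real difference is in how the regularity issue you flag is handled. Rather than mollifying indicators at the end, the paper invokes from the outset the identity
\[
d_{TV}(F,N)=\tfrac{1}{2}\sup\, E[\varphi(F)-\varphi(N)],
\]
where the supremum runs over smooth compactly supported $\varphi$ with $\|\varphi\|_\infty\leq 1$. For each such fixed $\varphi$, the Stein solution $g$ satisfies $\|g''\|_\infty\leq 2\|\varphi'\|_\infty<\infty$ (finite, though not uniformly in $\varphi$), and that is already enough for assumption (c) to kill the Taylor remainder for that particular $\varphi$. After the limit $t\downarrow 0$, the resulting bound involves only $\|g'\|_\infty\leq 4$, which \emph{is} uniform, so the supremum over $\varphi$ goes through directly. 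This absorbs your mollification step into a one-line appeal to the smooth-test-function characterization of $d_{TV}$, and also makes clear that the $L^3$ hypothesis is not really used to dominate a remainder along an approximation but simply to ensure that all the expectations in the argument make sense.
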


To see how to combine Proposition \ref{azerty} with Theorem \ref{meckes} (see also point(ii) in Remark \ref{A-hyper}), consider indeed a multiple Wiener-It\^o integral of the form $F=I_p^B(f)$, with $\sigma^2=E[F^2]>0$. Assume moreover that 
we have at our disposal a family $\{(B,B^t)\}_{t\geq 0}$ of exchangeable pairs  of Brownian motions, satisfying  the assumption (a) in
Proposition \ref{azerty}. 
Then, putting Proposition \ref{azerty} and Theorem \ref{meckes} together immediately yields that
\begin{eqnarray}\label{nourdinpeccati}
 d_{TV}(F,N)&\leq& \frac{2}{\sigma^2}\,E\left[\left|p\int_0^1 I^B_{p-1}(f(x,\cdot))^2dx-\sigma^2\right|\right].
 \end{eqnarray}
Finally, to obtain the  inequality stated Theorem \ref{np-np}(ii) from (\ref{nourdinpeccati}), it  remains to `play' cleverly with the (elementary) product formula (\ref{product}), see Proposition \ref{proof4} for the details. 

To conclude our elementary proof of Theorem \ref{np-np}(ii), we are thus left 
to construct 
the family $\{(B,B^t)\}_{t>0}$.
Actually, we will offer two constructions with different motivations:
the first one is inspired by Mehler's formula from Gaussian analysis, whereas the second one is more in the spirit of the so-called \emph{Gibbs sampling procedure} within Stein's method  (see \emph{e.g.} \cite[A.2]{Doebler}).

For the first construction, we consider two independent Brownian motions on $[0,1]$ defined on the same probability space $(\Omega,\mathcal{F},P)$, namely $B$ and $\widehat{B}$. We interpolate between  them by considering, for any $t\geq 0$,
\begin{eqnarray*}
B^t = e^{-t}B +\sqrt{1-e^{-2t}}\widehat{B}.
\end{eqnarray*}
It is then easy and straightforward to check that, for any $t\geq 0$,
this new Brownian motion $B^t$, together with $B$, forms an exchangeable pair (see Lemma \ref{exchan-b}).
Moreover, we will compute below (see (\ref{cond-exp-Bett})) that
$E\big[ I_p^{B^t}(f) \big| \sigma\{B\}\big] 
=e^{-pt}\,I_p^{B}(f) $ for any $p\geq 1$ and any $f\in L^2([0,1]^p)$,
from which  (a) in 
Proposition \ref{azerty} immediately follows.
 
 For the second construction,
we consider  two independent Gaussian white noise  $W$ and $W'$ on $[0,1]$ with Lebesgue intensity measure. For each $n\in\N$, we introduce  a uniform partition $\{ \Delta_1, \ldots, \Delta_n \}$ and   a uniformly distributed index $I_n\sim\mathscr{U}_{\{1,\ldots,n\}}$, independent of $W$ and $W'$.  
 For every Borel set $A\subset [0,1]$, we define $W^n(A) = W'(A\cap \Delta_{I_n}) + W(A\setminus \Delta_{I_n})$. This will give us a new Gaussian white noise $W^n$, which will form an exchangeable pair with $W$.   This construction is a particular Gibbs sampling procedure.
 The analogue of (a) in 
Proposition \ref{azerty} is satisfied, namely, if $f\in L^2([0,1]^p)$, $F = I_p^W(f)$ is the $p$th multiple integral with respect to $W$ and $F^{(n)} = I_p^{W^n}(f)$, we have
\begin{eqnarray*}
n E\big[ F^{(n)} - F\big|  \sigma\{W\}\big] \to -pF \quad\mbox{in $L^2(\Omega)$ as $n\to\infty$}.
\end{eqnarray*}
To apply Theorem \ref{meckes} in this setting, we only need to replace $\frac{1}{t}$ by $n$ and replace $F_t$ by $F^{(n)}$.  To get the exchangeable pairs $(B, B^n)$  of Brownian motions in this setting, it suffices to consider  $B(t) = W([0,t])$ and $B^n(t) = W^n([0,t])$, $t\in[0,1]$.  See Section \ref{sec-exch2} for  more precise statements.

Finally, we discuss the extension of our exchangeable pair approach on Wiener chaos to the multidimensional case.
Here again, it works perfectly well, and it allows us to recover the (known) rate of convergence associated with the remarkable Peccati-Tudor theorem \cite{PTudor}. This latter represents a multidimentional counterpart of the fourth moment theorem Theorem \ref{np-np}(i), exhibiting conditions involving only the second and fourth moments that ensure a central limit theorem for random {\it vectors} with chaotic components. 

\begin{theorem}[Peccati, Tudor \cite{PTudor}]\label{PT-thm}
Fix $d\geq 2$ and $p_1,\ldots,p_d\geq 1$.
For each $k\in\{1,\ldots,d\}$, let $(F^k_n)_{n\geq 1}$ be a sequence of multiple Wiener-It\^o integrals of order $p_k$. Assume that $E[F^k_nF^l_n]\to \sigma_{kl}$ as $n\to\infty$ for each pair $(k,l)\in\{1,\ldots,d\}^2$, with $\Sigma=(\sigma_{kl})_{1\leq k,l\leq d}$ non degenerate. Then, as $n\to\infty$, 
\begin{eqnarray}\label{PT}
F_n=(F^1_n,\ldots,F^d_n)\overset{\rm law}{\to} N\sim N(0,\Sigma)\quad \Longleftrightarrow\quad E[(F^k_n)^4]\to 3\sigma_{kk}^2\mbox{ for all $k\in\{1,\ldots,d\}$}.
\end{eqnarray}
\end{theorem}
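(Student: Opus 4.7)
The plan is to lift the one-dimensional exchangeable pair strategy to the vector setting and combine it with a multidimensional analogue of Meckes' Theorem \ref{meckes}, thereby producing a quantitative Stein bound that drives the limit theorem. The direction $\Rightarrow$ is the easier one: each $F_n^k$ lives in the fixed chaos of order $p_k$, on which hypercontractivity makes all $L^r$-norms equivalent, giving $\sup_n E[(F_n^k)^8] < \infty$; convergence in law to $N(0,\Sigma)$ therefore upgrades to convergence of all moments, yielding $E[(F_n^k)^4] \to 3\sigma_{kk}^2$ for every $k$.

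For $\Leftarrow$, fix any exchangeable pair family $(B, B^t)_{t>0}$ satisfying hypothesis (a) of Proposition \ref{azerty} (for concreteness, take the Mehler interpolation $B^t = e^{-t}B + \sqrt{1-e^{-2t}}\widehat B$). Writing $F_n^k = I_{p_k}^B(f_{n,k})$, $F_n^{t,k} = I_{p_k}^{B^t}(f_{n,k})$ and $F_n^t = (F_n^{t,1},\ldots,F_n^{t,d})$, coordinate-wise application of Proposition \ref{azerty}(a) gives
$$\lim_{t\downarrow 0}\frac{1}{t}\,E\bigl[F_n^t - F_n\bigm|\sigma\{B\}\bigr] = -\Lambda F_n,\qquad \Lambda := \mathrm{diag}(p_1,\ldots,p_d),$$
in $L^2(\Omega)$. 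A computation in the spirit of the proof of Proposition \ref{azerty}(b), which is bilinear in the integrand, extends part (b) to mixed pairs:
$$\lim_{t\downarrow 0}\frac{1}{t}\,E\bigl[(F_n^{t,k}-F_n^k)(F_n^{t,l}-F_n^l)\bigm|\sigma\{B\}\bigr] = 2 p_k p_l \int_0^1 I^B_{p_k-1}(f_{n,k}(x,\cdot))\,I^B_{p_l-1}(f_{n,l}(x,\cdot))\,dx,$$
while Cauchy--Schwarz combined with (c) applied coordinate-wise produces $\tfrac{1}{t}E[\|F_n^t - F_n\|^4] \to 0$.

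Next, one invokes a multidimensional analogue of Theorem \ref{meckes}, obtainable by following Meckes' original argument with the vector-valued Stein equation $\mathrm{tr}(\Sigma\,\mathrm{Hess}(f)(x)) - x\cdot\nabla f(x) = g(x) - E[g(N)]$. It yields, for smooth test functions $\phi$,
$$|E\phi(F_n) - E\phi(N)| \leq C(\phi)\,E\|S_n\|_{\mathrm{op}},\qquad N \sim N(0,\Sigma),$$
where $S_n$ is the symmetric random matrix whose $(k,l)$-entry equals the conditional covariance limit above minus $2p_k\sigma_{kl}$ --- the $(k,l)$-entry of the target matrix $2\Lambda\Sigma$, which is indeed symmetric because chaos orthogonality forces $\sigma_{kl} = 0$ whenever $p_k \neq p_l$. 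The convergence (\ref{PT}) then follows once we prove $E\|S_n\|_{\mathrm{op}} \to 0$, equivalently that each entry of $S_n$ vanishes in $L^1(\Omega)$.

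The main obstacle is precisely this last step. For diagonal entries ($k=l$) it is exactly the one-dimensional argument (Proposition \ref{proof4}) applied to $F_n^k$, driven by the hypothesis $E[(F_n^k)^4] \to 3\sigma_{kk}^2$. For off-diagonal entries, the mean already converges to the correct target by the isometry ($2p_k\sigma_{kl}$ when $p_k=p_l$, and $0$ otherwise by chaos orthogonality), so it suffices to show the variance vanishes. Expanding $I^B_{p_k-1}(f_{n,k}(x,\cdot))\,I^B_{p_l-1}(f_{n,l}(x,\cdot))$ via the multiplication formula (\ref{product}) and integrating over $x$ expresses the non-deterministic part as a finite sum of multiple integrals of contractions $f_{n,k}\tilde\otimes_s f_{n,l}$ for $1\leq s \leq p_k \wedge p_l$ (excluding the constant term that arises when $s = p_k = p_l$). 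The classical identity $\|f_{n,k}\otimes_s f_{n,l}\|^2 = \langle f_{n,k}\otimes_{p_k-s} f_{n,k},\,f_{n,l}\otimes_{p_l-s} f_{n,l}\rangle$ combined with Cauchy--Schwarz reduces control of these cross-contractions to the diagonal ones $\|f_{n,m}\otimes_r f_{n,m}\|$ for $m\in\{k,l\}$ and $r \in \{1,\ldots,p_m-1\}$; these vanish asymptotically by the one-dimensional fourth moment theorem applied to each $F_n^k$, closing the argument.
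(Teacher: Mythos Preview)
Your argument is correct and tracks the paper closely. The exchangeable-pair verification of the three conditions of the multivariate Meckes theorem (Proposition~\ref{Mec09}) for the vector $F_n$, with $\Lambda=\mathrm{diag}(p_1,\ldots,p_d)$ and $S$-matrix entries $2p_kp_l\int_0^1 I_{p_k-1}(f_{n,k}(x,\cdot))I_{p_l-1}(f_{n,l}(x,\cdot))\,dx-2p_k\sigma_{kl}$, is exactly what the paper carries out in Section~9.

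The only substantive difference lies in the last step. The paper does not prove Theorem~\ref{PT-thm} as such (it is cited from~\cite{PTudor}); rather, it establishes the quantitative bound~(\ref{NPR-exch}) by exchangeable pairs and then defers to~\cite[Theorem~4.3]{NR} for the inequality $\sum_{i,j}{\rm Var}(\cdots)\le E[\|F\|^4]-E[\|N\|^4]$, which combined with the equivalence~(\ref{NR}) delivers Theorem~\ref{PT-thm}. You instead bypass~\cite{NR} and control the off-diagonal variances directly, via the contraction identity $\|f\otimes_s g\|^2=\langle f\otimes_{p-s}f,\,g\otimes_{q-s}g\rangle$ and Cauchy--Schwarz, thereby reducing everything to the vanishing of the non-trivial self-contractions $\|f_{n,k}\otimes_r f_{n,k}\|$, $1\le r\le p_k-1$, which is one of the classical equivalent formulations of the one-dimensional fourth moment condition. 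This is more self-contained but is essentially the computation underlying~\cite{NR}, so the two routes coincide in substance. Your treatment of the $\Rightarrow$ direction (uniform integrability from hypercontractivity) is standard and is not addressed in the paper.
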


In \cite{NR}, it is shown that the right-hand side of (\ref{PT}) is also equivalent to
 \begin{eqnarray}\label{NR}
E[\|F_n\|^4]\to E[\|N\|^4]\quad\mbox{as $n\to\infty$},
\end{eqnarray}
where $\|\cdot\|$ stands for the usual Euclidean $\ell^2$-norm of $\R^d$.
Combining the main findings of \cite{NPR} and \cite{NR} yields the following quantitative version associated to Theorem \ref{PT-thm}, which we are able to recover by means of our elementary exchangeable approach.

\begin{theorem}[Nourdin, Peccati, R\'eveillac, Rosi\'nski \cite{NPR,NR}]\label{NPRR-thm}
Let $F=(F^1,\ldots,F^d)$ be a vector composed of multiple Wiener-It\^o integrals $F^k$, $1\leq k\leq d$. Assume that the covariance matrix $\Sigma$ of $F$ is invertible. Then, with $N\sim N(0,\Sigma)$,
\begin{eqnarray}\label{NPRR-stat}
d_W(F,N)\leq \sqrt{d}\|\Sigma\|^{\frac12}_{op}\|\Sigma^{-1}\|_{op}\sqrt{E[\|F\|^4]- E[\|N\|^4]},
\end{eqnarray}
where $d_W$ denotes the Wasserstein distance and $\|\cdot\|_{op}$ the operator norm of a matrix.
\end{theorem}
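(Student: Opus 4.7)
The plan is to extend the one-dimensional exchangeable pair argument coordinatewise. Using the Mehler construction $B^t = e^{-t}B + \sqrt{1-e^{-2t}}\widehat{B}$ already introduced in the paper, I would set
\[
F^t := (I_{p_1}^{B^t}(f_1), \ldots, I_{p_d}^{B^t}(f_d)),
\]
so that $(F, F^t)$ is an exchangeable pair on $\R^d$. Applying Proposition \ref{azerty}(a) to each coordinate yields the vector drift
\[
\lim_{t\downarrow 0} \frac{1}{t}\,E[F^t - F \,|\, \sigma\{B\}] = -\Lambda F \quad \text{in } L^2(\Omega, \R^d),
\]
with $\Lambda = \text{diag}(p_1, \ldots, p_d)$. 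Polarizing Proposition \ref{azerty}(b) (by applying (b) to $F_k + F_l$ and subtracting the pure-square pieces) produces the infinitesimal conditional covariance
\[
S_{kl} := \lim_{t\downarrow 0} \frac{1}{t}\,E[(F^t_k - F_k)(F^t_l - F_l) \,|\, \sigma\{B\}] = 2 p_k p_l \int_0^1 I_{p_k - 1}^B(f_k(x, \cdot))\, I_{p_l - 1}^B(f_l(x, \cdot))\, dx,
\]
while Proposition \ref{azerty}(c) and Cauchy--Schwarz eliminate the third-order remainder coordinate by coordinate.

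Next I would appeal to a vector-valued infinitesimal exchangeable-pair theorem — the natural multivariate analogue of Theorem \ref{meckes} in which the scalar $\lambda$ is replaced by the matrix $\Lambda$ — available from the Reinert--R\"ollin / Chatterjee--Meckes vector Stein theory and provable along exactly the same lines as Theorem \ref{meckes}. The relevant Stein equation
\[
\text{tr}(\Sigma\,\text{Hess}\,g(x)) - \langle \Lambda x,\,\nabla g(x)\rangle = h(x) - E[h(N)]
\]
comes with standard Gaussian-interpolation bounds of the form $\|\text{Hess}\,g\|_\infty \leq \|\Sigma^{-1}\|_{op}\|\Sigma\|_{op}^{1/2}\|h\|_{\text{Lip}}$. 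Combining these with conditions (a)--(c) above and testing against $1$-Lipschitz $h$ yields an estimate of the shape
\[
d_W(F, N) \leq \sqrt{d}\,\|\Sigma\|_{op}^{1/2}\,\|\Sigma^{-1}\|_{op}\,E\big[\|S - (\Lambda\Sigma + \Sigma\Lambda)\|_{HS}\big],
\]
where we used $(\Lambda\Sigma + \Sigma\Lambda)_{kl} = (p_k + p_l)\sigma_{kl}$, which is indeed the target towards which $S_{kl}$ should concentrate for the limit to be Gaussian.

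The principal obstacle is the final chaos identity: bounding the Hilbert--Schmidt error above by $\sqrt{E[\|F\|^4] - E[\|N\|^4]}$. By Cauchy--Schwarz this reduces to controlling $\sum_{k,l} E\big[(S_{kl} - (p_k + p_l)\sigma_{kl})^2\big]$. Expanding each $S_{kl}$ via the product formula (\ref{product}) — exactly as in the one-dimensional argument carried out in Proposition \ref{proof4}, but now for a product $I^B_{p_k-1}(f_k(x,\cdot))\,I^B_{p_l-1}(f_l(x,\cdot))$ and integrated in $x$ — rewrites the error as a non-negative quadratic form in the contraction norms $\|f_k \otimes_r f_l\|^2$ for $1\leq r<p_k\wedge p_l$. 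The key point, already observed in \cite{NR}, is that exactly the \emph{same} combination of contraction norms controls $E[\|F\|^4] - E[\|N\|^4]$, which rests on the algebraic identity $E[\|F\|^4] - E[\|N\|^4] = \sum_{k,l} \big(E[(F^k)^2(F^l)^2] - E[(N^k)^2(N^l)^2]\big)$ combined with the explicit fourth-moment expansion of chaoses. Making this matching explicit is the technical heart of the proof and produces (\ref{NPRR-stat}) with the stated constants.
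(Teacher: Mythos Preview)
Your approach is essentially the paper's own: the same Mehler exchangeable pair, the same diagonal $\Lambda=\mathrm{diag}(p_1,\dots,p_d)$, Meckes' multivariate exchangeable-pair bound (the paper cites \cite{Meckes09} directly rather than Reinert--R\"ollin), and the same final appeal to \cite{NR} for the inequality $\sum_{k,l}\mathrm{Var}\big(p_kp_l\int_0^1 I_{p_k-1}(f_k(x,\cdot))I_{p_l-1}(f_l(x,\cdot))\,dx\big)\le E[\|F\|^4]-E[\|N\|^4]$.

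Two small points to tighten. First, your polarization step ``apply (b) to $F_k+F_l$'' only works verbatim when $p_k=p_l$, since Proposition~\ref{azerty}(b) is stated for a single multiple integral of fixed order; when $p_k\neq p_l$ the sum $F_k+F_l$ is not of that form. The paper handles the cross terms by a direct product-formula computation (expanding $F_{i,t}F_{j,t}-F_iF_j$ chaos by chaos and applying assumption (a) termwise), which goes through for mixed orders without difficulty---you should do the same, or else invoke the bilinear extension $L(FG)-FLG-GLF=2\langle DF,DG\rangle$ that the paper records separately. Second, the constants you write down are not the ones this route actually produces: Meckes' multivariate bound gives a prefactor $\|\Lambda^{-1}\|_{op}\|\Sigma^{-1/2}\|_{op}/\sqrt{2\pi}=\|\Sigma^{-1/2}\|_{op}/(p_1\sqrt{2\pi})$, and the paper is explicit that its exchangeable-pair proof recovers \eqref{NPR-exch} only ``with slightly different constants''---the exact factor $\sqrt{d}\,\|\Sigma\|_{op}^{1/2}\|\Sigma^{-1}\|_{op}$ in \eqref{NPRR-stat} comes from the original Malliavin-calculus argument in \cite{NPR}, not from this one.
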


The currently available proof of (\ref{NPRR-stat}) relies on two main ingredients: 
(i) simple manipulations involving the product formula (\ref{product}) and implying that
\begin{eqnarray*}
\sum_{i,j=1}^d  {\rm Var}\Big(  
p_ip_j\int_0^1 I_{p_i-1}(f_i(x,\cdot))I_{p_j-1}(f_j(x,\cdot))dx
\Big)  \leq E[\|F\|^4]- E[\|N\|^4],
\end{eqnarray*}
(see \cite[Theorem 4.3]{NR} for the details) and (ii) the following inequality shown in \cite[Corollary 3.6]{NPR} by means of the Malliavin operators $D$, $\delta$ and $L$:
\begin{eqnarray}\label{NPR-exch}
d_W(F,N)\leq \sqrt{d}\|\Sigma\|^{\frac12}_{op}\|\Sigma^{-1}\|_{op}
\sqrt{\sum_{i,j=1}^d  {\rm Var}\Big(  
p_ip_j\int_0^1 I_{p_i-1}(f_i(x,\cdot))I_{p_j-1}(f_j(x,\cdot))dx
\Big) }.
\end{eqnarray}
Here, in the spirit of what we have done in dimension one, we also apply our elementary exchangeable pairs approach to prove (\ref{NPR-exch}), with slightly different constants.

The rest of the paper is  organized as follows.  Section 2 contains preliminary knowledge on multiple Wiener-It\^o integrals. In Section 3 (resp. 4), we present 
our first (resp. second) construction of exchangeable pairs of Brownian motions, and we give the main associated properties.
Section 5 is devoted to the proof of Proposition \ref{azerty}, whereas in Section 6 we offer a simple proof of Meckes' Theorem \ref{meckes}. Our new, elementary proof of Theorem \ref{np-np}(ii)  is given in Section 7. In Section 8, we further investigate the connections between our exchangeable pairs   and the  Malliavin operators. Finally, we discuss the extension of our approach to the multidimensional case in Section 9.

\bigskip

{\bf Acknowledgement}.   We would like to warmly thank Christian D\"obler and Giovanni Peccati, for very stimulating discussions on exchangeable pairs since the early stage of this work.

\section{Multiple Wiener-It\^o integrals: definition and elementary properties}\label{sec1} 
In this subsection, we recall  the definition of multiple Wiener-It\^o integrals, and then we give a few {\it soft} properties that will be needed for our new proof of Theorem \ref{np-np}(ii). We refer to the classical monograph \cite{Nualart06} for the details and missing proofs.

Let $f:[0,1]^p\to \R$ be a square-integrable function, with  $p\geq 1$  a given integer.
The $p$th multiple Wiener-It\^o integral of $f$ with respect to the Brownian motion $B=\big(B(x)\big)_{x\in [0,1]}$ is {\it formally} written as
\begin{eqnarray}\label{multiple}
\int_{[0,1]^p} f(x_1,\ldots,x_p) dB(x_1)\ldots dB(x_p).
\end{eqnarray}
To give a precise meaning to (\ref{multiple}), It\^o's crucial idea from the fifties was to first define (\ref{multiple}) for elementary functions
that vanish on diagonals, and then to approximate any $f$ in $L^2([0,1]^p)$ by such elementary functions.

Consider the diagonal set of $[0,1]^p$, that is,
$
D=\{(t_1,\ldots,t_p)\in[0,1]^p:\,\exists i\neq j,\,t_i= t_j\}.
$
Let $\mathcal{E}_p$ be the vector space formed by the set of elementary functions on $[0,1]^p$ that vanish over $D$, that is, the set of those functions $f$ of the form
\begin{eqnarray*}
f(x_1,\ldots,x_p)=\sum_{i_1,\ldots,i_p=1}^k \beta_{i_1\ldots i_p}
{\bf 1}_{
[\tau_{i_1-1},\tau_{i_1})
\times\ldots\times 
[\tau_{i_p-1},\tau_{i_p})
}
(x_1,\ldots,x_p),
\end{eqnarray*} 
where $k\geq 1$ and $0=\tau_0<\tau_1<\ldots<\tau_k$, and the coefficients
$\beta_{i_1\ldots i_p}$ are zero if any two of the indices $i_1,\ldots,i_p$ are equal.
For $f\in \mathcal{E}_p$, we define (without ambiguity with respect to the choice of the representation of $f$)
\begin{eqnarray*}
I^B_p(f) = \sum_{i_1,\ldots,i_p=1}^k \beta_{i_1\ldots i_p}
(B(\tau_{i_1})-B(\tau_{i_1-1}))
\ldots
(B(\tau_{i_p})-B(\tau_{i_p-1})).
\end{eqnarray*}
We also define the symmetrization $\widetilde{f}$ of $f$ by
\begin{eqnarray*}
\widetilde{f}(x_1,\ldots,x_p)=\frac{1}{p!} \sum_{\sigma\in\mathfrak{S}_p}
f(x_{\sigma(1)},\ldots,x_{\sigma(p)}),
\end{eqnarray*} 
where $\mathfrak{S}_p$ stands for the set of all permutations of $\{1,\ldots,p\}$.
The following elementary properties are immediate and easy to prove.
\begin{enumerate}
\item[1.] If $f\in\mathcal{E}_p$, then $I^B_p(f)=I^B_p(\widetilde{f})$.
\item[2.] If $f\in\mathcal{E}_p$ and $g\in\mathcal{E}_q$, then 
$E[I_p^B(f)]=0$
and
$
E[I^B_p(f)I^B_q(g)]=
\left\{
\begin{array}{lll}
0&\mbox{if $p\neq q$}\\
p!\langle \widetilde{f},\widetilde{g}\rangle_{L^2([0,1]^p)}&\mbox{if $p=q$}
\end{array}
\right..
$
\item[3.] The space $\mathcal{E}_p$ is dense in $L^2([0,1]^p)$. In other words, to each $f\in L^2([0,1]^p)$ one can associate a sequence $(f_n)_{n\geq 1}\subset\mathcal{E}_p$ such that $\|f-f_n\|_{L^2([0,1]^p)}\to 0$ as $n\to\infty$.
\item[4.] Since $E[(I_p^B(f_n)-I_p^B(f_m))^2]=p!\|\widetilde{f}_n-\widetilde{f}_m\|_{L^2([0,1]^p)}^2
\leq p!\|f_n-f_m\|_{L^2([0,1]^p)}^2
\to 0$ as $n,m\to\infty$ for $f$ and $(f_n)_{n\geq 1}$ as in the previous point 3, 
we deduce that the sequence $(I_p(f_n))_{n\geq 1}$ is Cauchy in $L^2(\Omega)$ and, as such, it admits a limit denoted by $I_p^B(f)$. It is easy to check that $I_p^B(f)$ only depends on $f$, not on the particular choice of the approximating sequence $(f_n)_{n\geq 1}$, and that points 1 to 3 continue to hold for general $f\in L^2([0,1]^p)$ and $g\in L^2([0,1]^q)$.
\end{enumerate}
We will also crucially rely on the following {\it product formula}, whose proof is elementary and can be made by induction. See, {\it e.g.}, \cite[Proposition 1.1.3]{Nualart06}.
\begin{enumerate}
\item[5.]
For any $p,q\geq 1$, and if $f\in L^2([0,1]^p)$ and $g\in L^2([0,1]^q)$ are symmetric, then
\begin{eqnarray}\label{product}
I_p^B(f) I_q^B(g) = \sum_{r=0}^{p\wedge q} r!\binom{p}{r}\binom{q}{r}I^B_{p+q-2r}(f\otimes_rg),
\end{eqnarray}
where $f\otimes_rg$ stands for the $r$th-contraction of $f$ and $g$, defined as an element of $L^2([0,1]^{p+q-2r})$ by
\begin{eqnarray*}
&&(f\otimes_r g)(x_1,\ldots,x_{p+q-2r}) \\
&=& \int_{[0,1]^r}f(x_1,\ldots,x_{p-r},u_1,\ldots,u_r)g(x_{p-r+1},\ldots,x_{p+q-2r},u_1,\ldots,u_r)
du_1\ldots du_r.
\end{eqnarray*}
\end{enumerate}
Product formula (\ref{product}) has a nice consequence, the inequality
(\ref{hyper1}) below. It is a very particular case of a more general phenomenon satisfied by multiple Wiener-It\^o integrals, the {\it hypercontractivity} property.
\begin{enumerate}
\item[6.]   For any $p\geq 1$, there exists a constant $c_{4,p}>0$ such that, for any (symmetric) $f\in L^2( [0,1]^p)$, 
\begin{eqnarray}\label{hyper1} 
E \big[ I_p^B(f)^4 \big]    \leq c_{4,p}\,  E \big[ I_p^B(f)^2 \big]^2 \, .
\end{eqnarray}
Indeed, thanks to  \eqref{product}  one can write
$
\displaystyle{I_p^B(f)^2 = \sum_{r=0}^p r!\binom{p}{r}^2 I_{2p-2r}^B(f\otimes_r f)}
$
so that
\begin{eqnarray*}
E[I_p^B(f)^4] = \sum_{r=0}^p r!^2\binom{p}{r}^4 (2p-2r)! \|f\widetilde{\otimes}_r f\|_{L^2( [0,1]^{2p-2r})}^2.
\end{eqnarray*}
The conclusion (\ref{hyper1}) follows by observing that 
\begin{eqnarray*}
 p!^2\|f\widetilde{\otimes}_r f\|_{L^2([0,1]^{2p-2r})}^2\leq  p!^2\|f\otimes_r f\|_{L^2( [0,1]^{2p-2r})}^2 \leq p!^2\|f\|_{L^2([0,1]^{p})}^4 = E[I^B_p(f)^2]^2.
 \end{eqnarray*}
 Furthermore, for each $n\geq 2$, using \eqref{product} and induction, one can show that, with $c_{2^n, p}$ a constant depending only on $p$ but not on $f$, 
          $$  E\big[    I_p^B(f)^{2^n} \big]  \leq c_{2^n,p}\,  E \big[ I_p^B(f)^2 \big]^{2^{n-1}} \,. $$
 So for any $r > 2$, there exists an absolute constant $c_{r,p}$ depending only on $p, r$ (but not on $f$) such that 
  \begin{eqnarray}\label{hyper2}  
   E\big[  \vert  I_p^B(f) \vert^r \big]  \leq c_{r,p}\,   E \big[ I_p^B(f)^2 \big]^{r/2} \,\, .
   \end{eqnarray}
 \end{enumerate}

\section{Exchangeable pair of Brownian motions: a first construction}\label{sec-exch}

As anticipated in the introduction,  for this construction we consider two independent Brownian motions on $[0,1]$ defined on the same probability space $(\Omega,\mathcal{F},P)$, namely $B$ and $\widehat{B}$, and we interpolate between them by considering,  for any $t\geq 0$, $B^t = e^{-t}B +\sqrt{1-e^{-2t}}\widehat{B}.$

\begin{lemma}\label{exchan-b}
For each $t\geq 0$, the pair $(B,B^t)$ is exchangeable, that is, 
$(B,B^t)\overset{\rm law}{=}(B^t,B)$.
In particular, $B^t$ is a Brownian motion.
\end{lemma}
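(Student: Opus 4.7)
The plan is to exploit the Gaussianity of both processes and reduce the statement to a covariance computation. Since $B$ and $\widehat{B}$ are independent centered Gaussian processes, the pair $(B,B^t)$ is a centered Gaussian process indexed by $[0,1]\sqcup[0,1]$, and its law is entirely determined by its covariance function. Thus to prove exchangeability it suffices to show that the covariance kernel is invariant under the swap of the two coordinates.

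Concretely, I would compute the three relevant covariances. Using independence of $B$ and $\widehat B$, for $s,u\in[0,1]$,
\begin{eqnarray*}
E[B^t(s)B^t(u)] &=& e^{-2t}(s\wedge u) + (1-e^{-2t})(s\wedge u) \;=\; s\wedge u,\\
E[B(s)B^t(u)] &=& e^{-t}(s\wedge u) \;=\; E[B^t(s)B(u)],
\end{eqnarray*}
while of course $E[B(s)B(u)]=s\wedge u$. This shows that the covariance of $(B,B^t)$ is symmetric under swapping the two coordinates, so $(B,B^t)\overset{\rm law}{=}(B^t,B)$.

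The second claim (that $B^t$ is itself a Brownian motion) then follows immediately: $B^t$ is centered Gaussian with covariance $s\wedge u$, and almost sure continuity of its sample paths is inherited from $B$ and $\widehat B$ since $B^t$ is a deterministic linear combination of two continuous processes.

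No real obstacle is expected here; the only minor care is to record that one should also verify $(B,B^t)$ is jointly Gaussian (which is immediate since both coordinates are linear in the independent Gaussian family $(B,\widehat B)$), and that sample-path continuity of $B^t$ is preserved under the interpolation. The essence of the proof is the observation $e^{-2t}+(1-e^{-2t})=1$, which is exactly what makes the Ornstein--Uhlenbeck style interpolation preserve the Brownian covariance.
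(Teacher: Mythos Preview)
Your proposal is correct and follows essentially the same approach as the paper: both argue that $(B,B^t)$ is a centered Gaussian process and verify exchangeability by checking that $E[B^t(x)B^t(y)]=E[B(x)B(y)]$ and $E[B(x)B^t(y)]=E[B^t(x)B(y)]$. Your version is slightly more detailed (writing out $s\wedge u$ explicitly and commenting on sample-path continuity), but the argument is the same.
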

\noindent
{\it Proof}.
Clearly, the bi-dimensional process $(B,B^t)$ is Gaussian and centered.
Moreover, for any $x,y\in[0,1]$,
\begin{eqnarray*}
E[B^t(x)B^t(y)]&=&e^{-2t}E[B(x)B(y)]+(1-e^{-2t})E[\widehat{B}(x)\widehat{B}(y)]=
E[B(x)B(y)]\\
E[B(x)B^t(y)] &=& e^{-t} E[B(x)B(y)] = E[B^t(x) B(y)].
\end{eqnarray*}
The desired conclusion follows.\qed

\bigskip

We can now state that, as written in the introduction, our exchangeable pair indeed satisfies the crucial property (a) of
Proposition \ref{azerty}.

\begin{theorem} \label{Bettembourg}
Let $p\geq 1$ be an integer, and consider a kernel $f\in L^2([0,1]^p)$.
Set $F=I_p^B(f)$ and $F_t=I_p^{B^t}(f)$, $t\geq 0$. Then,
\begin{eqnarray}\label{cond-exp-Bett}
E\big[ F_t \big| \sigma\{B\}\big] 
=e^{-pt}\,F.
\end{eqnarray}
In particular, convergence (a) in 
Proposition \ref{azerty} takes place:
 \begin{eqnarray}\label{lili}
 \lim_{t\downarrow 0} \frac1t \, E\Big[ I_p^{B^t}(f) - I_p^B(f) \big| \sigma\{B\} \Big] =  -p\,I_p^{B}(f)\quad\mbox{in $L^2(\Omega)$}.
 \end{eqnarray}
 \end{theorem}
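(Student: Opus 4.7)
The plan is to first establish the identity \eqref{cond-exp-Bett} on the dense subspace $\mathcal{E}_p$ of off-diagonal elementary functions by a direct expansion, then extend to $L^2([0,1]^p)$ by continuity, and finally deduce \eqref{lili} from a one-line scalar computation.

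For $f\in\mathcal{E}_p$ of the form
\[
f = \sum_{i_1,\ldots,i_p=1}^{k} \beta_{i_1\ldots i_p}\,\mathbf{1}_{[\tau_{i_1-1},\tau_{i_1})\times\cdots\times[\tau_{i_p-1},\tau_{i_p})},
\]
with $\beta_{i_1\ldots i_p}=0$ whenever two indices coincide, I would write $I_p^{B^t}(f)$ as a sum of products $\prod_{j=1}^p\bigl(B^t(\tau_{i_j})-B^t(\tau_{i_j-1})\bigr)$ and plug in $B^t=e^{-t}B+\sqrt{1-e^{-2t}}\,\widehat B$. Expanding each factor by the binomial rule and grouping terms according to the subset $J\subset\{1,\ldots,p\}$ of positions at which the $\widehat B$-contribution is picked gives, for each multi-index $(i_1,\ldots,i_p)$, a sum indexed by $J$ of products of $|J|$ increments of $\widehat B$ and $p-|J|$ increments of $B$, weighted by $e^{-(p-|J|)t}(1-e^{-2t})^{|J|/2}$. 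Since the indices are pairwise distinct on the support of $\beta$, the $\widehat B$-increments appearing in each term are over disjoint intervals, hence independent centered Gaussians independent of $\sigma\{B\}$. Therefore conditioning on $\sigma\{B\}$ kills every contribution with $|J|\geq 1$ and only the $J=\emptyset$ term survives, giving exactly $e^{-pt}$ times the corresponding product of $B$-increments. Summing in $(i_1,\ldots,i_p)$ yields $E[I_p^{B^t}(f)\mid\sigma\{B\}]=e^{-pt}I_p^B(f)$.

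To pass to general $f\in L^2([0,1]^p)$, I would pick a sequence $(f_n)\subset\mathcal{E}_p$ with $f_n\to f$ in $L^2$ (item~3 of Section~\ref{sec1}). By the isometry property, $I_p^B(f_n)\to I_p^B(f)$ in $L^2(\Omega)$; since Lemma \ref{exchan-b} tells us $B^t$ is also a Brownian motion, the same isometry gives $I_p^{B^t}(f_n)\to I_p^{B^t}(f)$ in $L^2(\Omega)$. Because conditional expectation is an $L^2$-contraction, passing to the limit in the identity established for $f_n$ yields \eqref{cond-exp-Bett} for $f$. Finally, \eqref{lili} follows by writing
\[
\frac{1}{t}\,E\bigl[I_p^{B^t}(f)-I_p^B(f)\mid\sigma\{B\}\bigr]=\frac{e^{-pt}-1}{t}\,I_p^B(f),
\]
and letting $t\downarrow 0$, since the deterministic prefactor tends to $-p$ while $I_p^B(f)\in L^2(\Omega)$.

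The only step that requires any genuine care is the combinatorial expansion on $\mathcal{E}_p$, and even there the crucial point, namely that off-diagonality forces the $\widehat B$-factors to be independent so that all mixed terms vanish under conditional expectation, is essentially a one-line observation. The $L^2$-density argument and the scalar limit are routine.
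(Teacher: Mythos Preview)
Your proof is correct and follows essentially the same approach as the paper: expand $I_p^{B^t}(f)$ for $f\in\mathcal{E}_p$ using $B^t=e^{-t}B+\sqrt{1-e^{-2t}}\widehat B$, integrate out $\widehat B$ so that only the pure $B$-term survives with weight $e^{-pt}$, extend by density to $L^2([0,1]^p)$, and conclude \eqref{lili} from the scalar limit $(e^{-pt}-1)/t\to -p$. You are merely more explicit than the paper about the subset-$J$ bookkeeping and the reason the mixed terms vanish, and about the $L^2$-contraction of conditional expectation in the density step; the underlying argument is the same.
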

 \noindent
{\it Proof.} 
  Consider first  the case where  $f\in \mathcal{E}_p$, that is, $f$ has the form
\begin{eqnarray*}
f(x_1,\ldots,x_p)=\sum_{i_1,\ldots,i_p=1}^k \beta_{i_1\ldots i_p}
{\bf 1}_{
[\tau_{i_1-1},\tau_{i_1})
\times\ldots\times 
[\tau_{i_p-1},\tau_{i_p})
}
(x_1,\ldots,x_p),
\end{eqnarray*} 
with $k\geq 1$ and $0=\tau_0<\tau_1<\ldots<\tau_k$, and the coefficients
$\beta_{i_1\ldots i_p}$ are zero if any two of the indices $i_1,\ldots,i_p$ are equal. We then have
\begin{eqnarray*}
F_t &=& \sum_{i_1,\ldots,i_p=1}^k \beta_{i_1\ldots i_p}
(B^t(\tau_{i_1})-B^t(\tau_{i_1-1}))
\ldots
(B^t(\tau_{i_p})-B^t(\tau_{i_p-1}))\\
&=& \sum_{i_1,\ldots,i_p=1}^k \beta_{i_1\ldots i_p}
\big[e^{-t}(B(\tau_{i_1})-B(\tau_{i_1-1}))
+\sqrt{1-e^{-2t}}(\widehat{B}(\tau_{i_1})-\widehat{B}(\tau_{i_1-1}))
\big]\\
&&\hskip1.5cm\times\ldots\times
\big[e^{-t}(B(\tau_{i_1})-B(\tau_{i_1-1}))
+\sqrt{1-e^{-2t}}(\widehat{B}(\tau_{i_p})-\widehat{B}(\tau_{i_p-1}))
\big].
\end{eqnarray*}
Expanding and integrating with respect to $\widehat{B}$ yields (\ref{cond-exp-Bett}) for elementary $f$.
Thanks to point 4 in Section \ref{sec1}, we can extend it to any $f\in L^2([0,1]^p)$. We then deduce that
\begin{eqnarray*}\label{thionville}
\frac1t\,E\big[  F_t  -  F \big| \sigma\{B\}\big]  =  \frac{e^{-pt}-1}{t}\,  F ,
\end{eqnarray*}
from which (\ref{lili}) now follows immediately. \qed

\section{Exchangeable pair of Brownian motions: a second construction}\label{sec-exch2}

In this section, we present yet another construction of exchangeable pairs via Gaussian white noise.  We believe it is of independent interest, as such a construction can be similarly carried out for other additive noises.  
This part may be skipped in a first reading, as it is not used in other sections. And we assume that the readers are familiar with the  multiple Wiener-It\^o integrals with respect to the Gaussian white noise, and refer to  \cite[Page 8-13]{Nualart06} for all missing details.


Let $W$ be a   Gaussian white noise on $[0,1]$ with Lebesgue intensity measure $\nu$, 
that is, $W$ is a centred Gaussian process indexed by Borel subsets of $[0,1]$ such that  for any Borel sets $A, B\subset [0,1]$, $W(A)\sim N\big(0, \nu(A)\big)$ and $E\big[ W(A) W(B)\big] = \nu(A\cap B)$.   We  denote by $\mathscr{G}:  =\sigma\{ W\}$ the $\sigma$-algebra generated by  $\big\{ W(A)$: $A$ Borel subset of $[0,1] \big\}$. Now let $W'$ be an independent copy of $W$ (denote by $\mathscr{G}' =\sigma\{W'\}$ the $\sigma$-algebra generated by $W'$)   and   $I_n$ be a uniform random variable over $\{ 1, \ldots, n \}$ for each $n\in\N$ such that $I_n$, $W, W'$ are  independent.   For each fixed $n\in\N$, we  consider the   partition   $[0,1] = \bigcup_{j=1}^n \Delta_j$ 
  with $\Delta_1 = [0, \frac{1}{n}]$, $\Delta_2 = (\frac{1}{n}, \frac{2}{n}]$, $\ldots$ , $\Delta_n = ( 1 -\frac{1}{n} , 1]$.\\

 \paragraph{\bf\small Definition 4.0.}  {\it  Set $ W^n(A) := W'\big(A\cap \Delta_{I_n} \big) +  W\big( A\setminus\Delta_{I_n} \big)$ for any Borel set  $A\subset [0,1]$. }

\begin{remark} {\rm One can first treat $W$  as the superposition of $\big\{ W\vert_{\Delta_j}, j=1, \ldots, n\big\}$, where $W\vert_{\Delta_j}$ denotes the Gaussian white noise on $\Delta_j$.
Then according to  $I_n = j$, we (only) replace $W\vert_{\Delta_j}$ by an independent copy $W'\vert_{\Delta_j}$ so that we get $W^n$.  This is nothing else but a particular  Gibbs sampling procedure (see \cite[A.2]{Doebler}),   hence heuristically speaking, the new process $W^n$ shall form an exchangeable pair with $W$.    
  }

\end{remark}    

  \begin{lemma}\label{verify}  $W$ and $W^n$     form an exchangeable pair with $W$, that is,  $(W, W^n) \overset{law}{=}(W^n, W)$.   In particular, $W^n$ is a Gaussian white noise on $[0,1]$ with  Lebesgue intensity measure.
  
  \end{lemma}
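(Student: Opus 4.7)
The plan is to reduce the claim to the trivial exchangeability of two i.i.d.\ processes by conditioning on $I_n$. Concretely, since $I_n$ is independent of $(W,W')$ and takes values in the finite set $\{1,\ldots,n\}$, it suffices to show that for every $j\in\{1,\ldots,n\}$,
\begin{eqnarray*}
(W,W^n)\,\big|\,\{I_n=j\}\;\overset{\rm law}{=}\;(W^n,W)\,\big|\,\{I_n=j\}.
\end{eqnarray*}
Averaging over $j$ with the uniform weights $1/n$ then yields the unconditional equality in law.

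To handle the conditional statement, I would fix finitely many Borel sets $A_1,\ldots,A_k\subset[0,1]$ and write, for each $i$,
\begin{eqnarray*}
W(A_i)\;=\;W(A_i\cap\Delta_j)+W(A_i\setminus\Delta_j),\qquad W^n(A_i)\,\big|\,\{I_n=j\}\;=\;W'(A_i\cap\Delta_j)+W(A_i\setminus\Delta_j).
\end{eqnarray*}
The $3k$-dimensional Gaussian vector $\bigl(W(A_i\cap\Delta_j),\,W'(A_i\cap\Delta_j),\,W(A_i\setminus\Delta_j)\bigr)_{i=1}^k$ has a joint law that is invariant under the swap $W|_{\Delta_j}\leftrightarrow W'|_{\Delta_j}$, simply because $W|_{\Delta_j}$ and $W'|_{\Delta_j}$ are i.i.d.\ Gaussian white noises on $\Delta_j$. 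This swap exchanges the first and second blocks of $k$ coordinates while leaving the third block untouched; it therefore sends the vector $(W(A_i),W^n(A_i))_{i=1}^k$ (conditionally on $I_n=j$) precisely to $(W^n(A_i),W(A_i))_{i=1}^k$, yielding the conditional equality in law on an arbitrary finite collection of Borel sets.

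Since finite-dimensional distributions determine the law of a Gaussian process indexed by Borel sets, the above is enough to conclude $(W,W^n)\overset{\rm law}{=}(W^n,W)$. The ``in particular'' statement is then immediate: taking marginals gives $W^n\overset{\rm law}{=}W$, so $W^n$ is a centred Gaussian process indexed by Borel subsets of $[0,1]$ with covariance $E[W^n(A)W^n(B)]=\nu(A\cap B)$, i.e.\ a Gaussian white noise with Lebesgue intensity.

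The only mildly delicate point is bookkeeping: one must be careful to perform the swap on the raw white-noise pieces $(W|_{\Delta_j},W'|_{\Delta_j})$ rather than on the derived random variables, and to check that the block of coordinates depending only on $W|_{[0,1]\setminus\Delta_j}$ stays invariant under the swap. Once this is set up, the argument is essentially combinatorial and requires no computation beyond the definition of $W^n$.
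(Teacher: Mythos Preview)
Your proof is correct and follows essentially the same route as the paper: condition on $I_n=j$, use that $W|_{\Delta_j}$ and $W'|_{\Delta_j}$ are i.i.d.\ and independent of $W|_{[0,1]\setminus\Delta_j}$, and then average over $j$. The paper carries this out first for mutually disjoint test sets and then passes to general Borel sets by writing each as a disjoint union, whereas you handle arbitrary Borel sets directly via the swap symmetry; this is a cosmetic difference only.

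One minor wording issue: in your concluding sentence you invoke that ``finite-dimensional distributions determine the law of a Gaussian process''. The pair $(W,W^n)$ is \emph{not} jointly Gaussian (the coupling depends on the discrete variable $I_n$; the paper points this out explicitly in a remark). Of course finite-dimensional distributions determine the law of \emph{any} set-indexed process, so your argument goes through unchanged --- just drop the word ``Gaussian'' there.
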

 \begin{proof}  Let us first consider $m$ mutually disjoint Borel sets $A_1, \ldots, A_m\subset [0,1]$. Given $D_1, D_2$   Borel subsets of $\R^m$, we have
  \begin{eqnarray*}
& & P\Big(    \big( W(A_1), \ldots, W(A_m)  \big)\in D_1   \,    ,  \,    \big( W^n(A_1), \ldots, W^n(A_m)  \big)\in D_2  \Big) \\
&=&   \sum_{v = 1}^n  P\Big(      \big( W(A_1), \ldots, W(A_m)  \big)\in D_1   \,    ,  \,    \big( W^n(A_1), \ldots, W^n(A_m)  \big)\in D_2  \,, \, I_n = v \Big)  \\
&=&  \frac{1}{n}\,\, \sum_{v = 1}^n    P\Big( g(X_v, Y_v)\in D_1 , g(X'_v, Y_v)\in D_2    \Big) \,\,,
   \end{eqnarray*}
   where for each $v\in\{ 1, \ldots, n\}$,
\begin{itemize}
\item $X_v:= \big( W(A_1\cap \Delta_v),  \ldots,  W(A_m\cap \Delta_v ) \big)  $,   $X'_v:= \big( W'(A_1\cap \Delta_v),  \ldots,  W'(A_m\cap \Delta_v ) \big)  $,
\item $Y_v:= \big( W(A_1\setminus \Delta_v),  \ldots,  W(A_m\setminus \Delta_v ) \big)  $, and  $g$ is a    function from $\R^{2m}$ to $\R^m$ given by
 $   (x_{1}, \ldots, x_{m} , y_{1}, \ldots, y_{m}   )   \mapsto   g\big(x_{1}, \ldots, x_{m} , y_{1}, \ldots, y_{m}   \big)  = \big( x_1 + y_1, \ldots, x_m+y_m    \big)$
 \end{itemize}
It is clear that for each $v\in\{1, \ldots, n\}$, $X_v, X'_v$ and $Y_v$ are independent, therefore  $g(X_v, Y_v)$ and $g(X'_v, Y_v)$ form an exchangeable pair. It follows from the above equalities that 
  \begin{eqnarray*}
& & P\Big(    \big( W(A_1), \ldots, W(A_m)  \big)\in D_1   \,    ,  \,    \big( W^n(A_1), \ldots, W^n(A_m)  \big)\in D_2  \Big) \\
&=&  \frac{1}{n}\,\, \sum_{v = 1}^n    P\Big( g(X'_v, Y_v)\in D_1 , g(X_v, Y_v)\in D_2    \Big)  \\
&=&  P\Big(    \big( W^n(A_1), \ldots, W^n(A_m)  \big)\in D_1   \,    ,  \,    \big( W(A_1), \ldots, W(A_m)  \big)\in D_2  \Big) \,\, .
   \end{eqnarray*}
 This proves the exchangeability of  $  \big( W(A_1), \ldots, W(A_m)  \big)$ and $\big( W^n(A_1), \ldots, W^n(A_m)  \big)$. 
 
 Now let $B_1, \ldots, B_m$ be Borel subsets of $[0,1]$, then one can find mutually disjoint Borel sets $A_1, \ldots, A_p$ (for some $p\in\N$)  such that each $B_j$ is a union of some of $A_i$'s. Therefore we can find some measurable $\phi: \R^p\to \R^m$ such that $\big( W(B_1), \ldots, W(B_m) \big) = \phi\big( W(A_1), \ldots, W(A_p) \big)$. Accordingly, $\big( W^n(B_1), \ldots, W^n(B_m) \big) = \phi\big( W^n(A_1), \ldots, W^n(A_p) \big)$, hence  $  \big( W(B_1), \ldots, W(B_m)  \big)$ and $\big( W^n(B_1), \ldots, W^n(B_m)  \big)$ are exchangeable.  Now our proof is complete.
  \end{proof}

\begin{remark}\label{BM-rem} {\rm 
For each $t\in[0,1]$, we set $B(t) := W([0,t])$ and $B^n(t) := W^n([0,t])$. Modulo  continuous modifications, one can see from Lemma \ref{verify} that  $B$, $B^n$ are two Brownian motions that  form an exchangeable pair.   An important difference between this construction and the previous one is that $(B, B^t)$ is bi-dimensional Gaussian process whereas $B$, $B^n$ are not   \emph{jointly} Gaussian.

}  
  
\end{remark}

Before we state the analogous result to Theorem \ref{Bettembourg}, we briefly recall the construction of multiple Wiener-It\^o integrals in white noise setting.

\begin{enumerate}
\item
 For each $p\in\N$, we denote by  $\EE_p$ the set of simple functions of the form
 \begin{eqnarray}\label{simple-fct}
 f\big(t_1, \ldots, t_p\big) = \sum_{i_1, \ldots, i_p = 1}^m \beta_{i_1\ldots i_p} \1_{A_{i_1}\times\ldots \times A_{i_p}}\big(t_1, \ldots, t_p\big) \,\, ,
 \end{eqnarray}
where $m\in\N$, $A_1, \ldots, A_m$ are pair-wise disjoint Borel subsets of $[0,1]$, and the coefficients $\beta_{i_1\ldots i_p}$ are zero if any two of the indices  $i_1,\ldots i_p$ are equal. It is known that $\EE_p$ is dense in $L^2([0,1]^p)$.

\item For $f$ given as in \eqref{simple-fct}, the $p$th multiple integral  with respect to $W$ is defined as
 $$I^W_p(f) : = \sum_{i_1, \ldots, i_p = 1}^m \beta_{i_1\ldots i_p} W(A_{i_1}) \ldots  W(A_{i_p} ) \,\, ,  $$
and  one can extend $I_p^W$ to $L^2([0,1]^p)$ via usual approximation argument.  Note $I_p^W(f)$ is nothing else but  $I_p^B(f)$ with the Brownian motion $B$ constructed in Remark \ref{BM-rem}.
  \end{enumerate}

\begin{theorem}\label{GWN}  If   $F= I^W_p(f)$ for some symmetric  $f\in L^2([0,1]^p)$ and we set $F^{(n)} := I_p^{W^n}(f)$,  then   in $L^2(\Omega, \mathscr{G}, P)$ and as $n\to+\infty$,  $n \,   E\big[ F^{(n)} - F \big\vert \mathscr{G} \big] \to  -pF$.

  \end{theorem}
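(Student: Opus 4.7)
The strategy is to condition on the uniform index $I_n$ first, and only then on $\mathscr{G}$. For each deterministic $v\in\{1,\ldots,n\}$ introduce the auxiliary Gaussian white noise
$W^{(v)}(A):=W'(A\cap\Delta_v)+W(A\setminus\Delta_v)$, so that $W^n=W^{(I_n)}$. Since $I_n$ is independent of $(W,W')$ and uniform on $\{1,\ldots,n\}$, the tower property yields
\begin{eqnarray*}
E\big[F^{(n)}\big|\mathscr{G}\big]=\frac{1}{n}\sum_{v=1}^n E\big[I_p^{W^{(v)}}(f)\big|\mathscr{G}\big],
\end{eqnarray*}
so the whole problem reduces to computing $E[I_p^{W^{(v)}}(f)\mid\mathscr{G}]$ for one fixed $v$.

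\textbf{The key identity.} I claim that for each $v$ and each symmetric $f\in L^2([0,1]^p)$,
\begin{eqnarray*}
E\big[I_p^{W^{(v)}}(f)\big|\mathscr{G}\big]=I_p^W\big(f\cdot \1_{(\Delta_v^c)^p}\big).
\end{eqnarray*}
I would first verify this on a simple function of the form \eqref{simple-fct} supported on $A_{i_1}\times\cdots\times A_{i_p}$ with pairwise distinct indices (equivalently, pairwise disjoint sets). Writing $W^{(v)}(A_{i_j})=W(A_{i_j}\setminus\Delta_v)+W'(A_{i_j}\cap\Delta_v)$, expanding the product $\prod_j W^{(v)}(A_{i_j})$ and taking $E[\cdot\mid\mathscr{G}]$, every monomial carrying at least one $W'$-factor vanishes: the sets $A_{i_j}\cap\Delta_v$ are pairwise disjoint, so the $W'$-factors are independent centred Gaussians and the Isserlis formula only produces covariances between $W'$'s living on disjoint sets. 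Only the all-$W$ monomial survives and gives exactly $\prod_j W(A_{i_j}\setminus\Delta_v)=I_p^W(f\cdot\1_{(\Delta_v^c)^p})$. Both sides of the claimed identity are $L^2(\Omega)$-continuous in $f$ (the left-hand side because $W^{(v)}$ is itself a Gaussian white noise with intensity $\nu$, the right-hand side by the Wiener-It\^o isometry), so the identity extends to arbitrary symmetric $f\in L^2([0,1]^p)$ by density of $\EE_p$ and the $L^2$-contractivity of conditional expectation.

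\textbf{Conclusion via a deterministic $L^2$ bound.} Averaging over $v$ turns the previous step into $E[F^{(n)}\mid\mathscr{G}]=I_p^W(\bar f_n)$ with $\bar f_n(t)=f(t)\cdot\frac1n\sum_{v=1}^n\prod_{j=1}^p\1_{\Delta_v^c}(t_j)$. Fix $t=(t_1,\ldots,t_p)\in[0,1]^p$ and let $r_n(t)$ be the number of distinct blocks $\Delta_v$ visited by $t_1,\ldots,t_p$; counting the $v$'s avoided by all coordinates gives $\frac1n\sum_v\prod_j\1_{\Delta_v^c}(t_j)=1-r_n(t)/n$, hence $n(\bar f_n-f)(t)=-r_n(t)f(t)$. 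Since $r_n$ is permutation-symmetric, so is $(p-r_n)f$, and the Wiener-It\^o isometry yields
\begin{eqnarray*}
\big\|n\,E[F^{(n)}-F\mid\mathscr{G}]+pF\big\|_{L^2(\Omega)}^2=p!\,\big\|(p-r_n)f\big\|_{L^2([0,1]^p)}^2\leq p!\,p^2\int_{A_n}f^2\,dt,
\end{eqnarray*}
where $A_n=\{t:r_n(t)<p\}=\{t:\exists\,j\neq k,\ t_j\text{ and }t_k\text{ lie in the same }\Delta_v\}$. A union bound gives $\mathrm{Leb}(A_n)\leq\binom{p}{2}/n\to 0$, and because $f^2\in L^1([0,1]^p)$, absolute continuity of the Lebesgue integral forces $\int_{A_n}f^2\,dt\to 0$, which closes the proof. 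The only genuinely delicate point is the identification of $E[I_p^{W^{(v)}}(f)\mid\mathscr{G}]$ in step 2; once it is in hand, the convergence is a transparent deterministic $L^2$ estimate.
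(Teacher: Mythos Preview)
Your proof is correct and takes a genuinely different route from the paper's.

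The paper also starts by conditioning on $I_n=v$ and obtaining $E[\prod_j W^{(v)}(A_{i_j})\mid\mathscr{G}]=\prod_j W(A_{i_j}\setminus\Delta_v)$, but then proceeds at the level of random variables: it writes $W(A_j\setminus\Delta_v)=W(A_j)-W(A_j\cap\Delta_v)$, expands by inclusion--exclusion, and isolates $nE[F^{(n)}\mid\mathscr{G}]=nF-pF+R_n(F)$ with an explicit remainder $R_n(F)$ built from products containing at least two factors $W(A^v_{k})$. The paper then invests real effort in showing that $\|R_n(F)\|_{L^2}\le \Theta\,\|F\|_{L^2}$ uniformly in $n$, and uses this uniform operator bound together with density to pass from $\EE_p$ to general $f$.

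You instead stay at the kernel level: once the key identity $E[I_p^{W^{(v)}}(f)\mid\mathscr{G}]=I_p^W(f\cdot\1_{(\Delta_v^c)^p})$ is established and extended by continuity, the averaging gives $E[F^{(n)}\mid\mathscr{G}]=I_p^W(\bar f_n)$ for an explicit symmetric kernel, and the combinatorial observation $\frac1n\sum_v\1_{(\Delta_v^c)^p}=1-r_n/n$ reduces the $L^2(\Omega)$ convergence to the purely deterministic estimate $\|(p-r_n)f\|_{L^2([0,1]^p)}\to 0$. This sidesteps the inclusion--exclusion bookkeeping and the uniform operator-norm bound entirely, and handles general $f\in L^2$ directly rather than via a two-stage density argument. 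The price is that you need to justify the key identity for \emph{all} $f$ first (which you do cleanly via $L^2$-continuity of both sides); the paper's approach, by contrast, makes the mechanism $nF-pF+R_n$ more visible and yields an explicit operator bound that could be reused elsewhere.
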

\begin{proof}  First we consider the case where $f\in\mathscr{E}_p$, we assume moreover that $F =\prod_{j=1}^p W(A_j)$ with $A_1, \ldots, A_p$ mutually disjoint Borel subsets of $[0,1]$, and accordingly we define $F^{(n)} =\prod_{j=1}^p W^n(A_j)$. Then, (we write $[p] = \{1, \ldots, p\}$,  $A^v = A\cap \Delta_v$ for any $A\subset [0,1]$ and $v\in\{1, \ldots, n\}$)
\begin{eqnarray*}
 n \,  E\big[ F^{(n)} \big\vert \mathscr{G} \big]     &=& n  \, E\left\{ \sum_{v=1}^n \1_{\{ I_n = v \} } \prod_{j=1}^p \big[ W'(A_j^v) + W(A_j\setminus \Delta_v) \big] \,\, \big\vert \mathscr{G}  \,\,\, \right\}   \\
 &=&  \sum_{v=1}^n\,  E\left\{  \prod_{j=1}^p \big[ W'(A_j^v) + W(A_j\setminus \Delta_v) \big] \,\, \big\vert \mathscr{G}  \,\,\, \right\}  =  \sum_{v=1}^n  \prod_{j=1}^p W\big(  A_j\setminus \Delta_v \big)  \\
 &=&  \sum_{v=1}^n  \Bigg\{ \,\,  \left(  \prod_{j=1}^p   W(  A_j) \right) -  \sum_{k=1}^p W(A_k^v)\left( \prod_{j\in[p]\setminus\{ k\}} W(A_j) \right)  \\
 && \quad + \sum_{\ell=2}^p (-1)^\ell \sum_{\substack{ k_1,\ldots, k_\ell\in [p] \\ \text{all distinct} }} \left(\,\, \prod_{j\in[p] \setminus\{ k_1,\ldots, k_\ell\}} W(A_j) \, \right)  W\big( A^v_{k_1}\big)\cdot\cdot\cdot W\big( A^v_{k_\ell}\big) \,\, \Bigg\}  \\
 &=& n\, F - p\, F +  R_n(F)     \,\,,\\
 \text{where}\quad  R_n(F) &= &\sum_{\ell=2}^p (-1)^\ell \sum_{\substack{ k_1,\ldots, k_\ell\in [p] \\ \text{all distinct} }} \left(\,\, \prod_{j\in[p]\setminus\{ k_1,\ldots, k_\ell\}} W(A_j) \, \right)  \sum_{v=1}^n W\big( A^v_{k_1}\big)\cdot\cdot\cdot W\big( A^v_{k_\ell}\big)   \, .
\end{eqnarray*}
   Then  $R_n(F)$ converges in $L^2(\Omega, \mathscr{G} , P)$ to $0$, due to   the fact that $\sum_{v=1}^n  \prod_{i=1}^q W(A_{k_i}^v)$ converges in $L^2(\Omega)$ to $0$, as $n\to+\infty$,
if $q\geq 2$ and all $k_i$'s are distinct numbers.   This proves our theorem when $f\in\EE_p$.

 By the above computation, we can see that if $F = I^W_p(f)$ with $f$ given in \eqref{simple-fct}, then
$$ R_n(F)  = \sum_{i_1, \ldots, i_p=1}^m \beta_{i_1i_2\ldots i_p}     \sum_{\ell=2}^p (-1)^\ell \sum_{\substack{ k_1,\ldots, k_\ell = 1 \\ \text{all distinct} }}^p \left( \prod_{j\in [p]\setminus\{ k_1,\ldots, k_\ell\}} W(A_{i_j})  \right) \sum_{v=1}^n W\big( A^v_{i_{k_1}}\big)\cdot\cdot\cdot W\big( A^v_{i_{k_\ell}}\big). $$
Therefore, using Wiener-It\^o isometry,  we can first write $ \big\|   R_n(F) \big\| _{L^2(\Omega)}^2$ as
   \begin{eqnarray*}
p! \sum_{i_1, \ldots, i_p=1}^m \big(\beta_{i_1 i_2\ldots i_p}\big)^2  \sum_{v=1}^n   \Bigg\| \,\,  \sum_{\ell=2}^p (-1)^\ell \sum_{\substack{ k_1,\ldots, k_\ell\in[p] \\ \text{all distinct} }} \left( \prod_{j\in[p]\setminus\{ k_1,\ldots, k_\ell\}} W(A_{i_j})  \right) W\big( A^v_{i_{k_1}}\big)\cdot\cdot\cdot W\big( A^v_{i_{k_\ell}}\big) \Bigg\| _{L^2(\Omega)}^2 \, ,
\end{eqnarray*}
and then using the elementary inequality $(a_1 + \ldots + a_m)^\beta \leq m^{\beta - 1} \sum_{i=1}^m \vert a_i\vert^\beta$ for $a_i\in\R$, $\beta > 1$, $m\in\N$, we have 
 \begin{eqnarray*}
& \, &\Bigg\| \,\,  \sum_{\ell=2}^p (-1)^\ell \sum_{\substack{ k_1,\ldots, k_\ell\in[p] \\ \text{all distinct} }} \left( \prod_{j\in[p]\setminus\{ k_1,\ldots, k_\ell\}} W(A_{i_j})  \right) W\big( A^v_{i_{k_1}}\big)\cdot\cdot\cdot W\big( A^v_{i_{k_\ell}}\big) \Bigg\| _{L^2(\Omega)}^2  \\
& \leq&   \Theta_1    \sum_{\ell=2}^p \,\,    \sum_{\substack{ k_1,\ldots, k_\ell\in [p] \\ \text{all distinct} }}   \Bigg\|  \left(\,\, \prod_{j\in  [p] \setminus\{ k_1,\ldots, k_\ell\}} W(A_{i_j}) \, \right)  W\big( A^v_{i_{k_1}}\big)\cdot\cdot\cdot W\big( A^v_{i_{k_\ell}}\big) \Bigg\| _{L^2(\Omega)}^2 \\
& =&    \Theta_1      \sum_{\ell=2}^p    \sum_{\substack{ k_1,\ldots, k_\ell\in  [p]  \\ \text{all distinct} }}   \left(\,\, \prod_{j\in[p] \setminus\{ k_1,\ldots, k_\ell\}} \nu(A_{i_j}) \, \right)    \nu\big( A^v_{i_{k_1}}\big)\cdot\cdot\cdot \nu\big( A^v_{i_{k_\ell}}\big)  \\
&\leq& \Theta_2       \sum_{\substack{ k_1,  k_2\in [p] \\  k_1 \neq k_2 }}   \left(\,\, \prod_{j\in [p] \setminus\{ k_1, k_2 \}} \nu(A_{i_j}) \, \right) \nu\big( A^v_{i_{k_1}}\big)\, \nu\big( A^v_{i_{k_2}}\big) 
\end{eqnarray*}
 where $\Theta_1, \Theta_2$ (and $\Theta_3$ in the following) are some absolute constants that do not depend on $n$ or $F$.     Note now for $k_1\neq k_2$, 
 $ \sum_{v=1}^n \nu\big( A^v_{i_{k_1}}\big)\cdot\nu\big( A^v_{i_{k_2}}\big) \leq  \nu\big( A_{i_{k_1}}\big)  \sum_{v=1}^n  \nu\big( A^v_{i_{k_2}}\big) =   \nu\big( A_{i_{k_1}}\big)\cdot\nu\big( A_{i_{k_2}}\big)$,
thus, 
\begin{eqnarray*}
\big\|   R_n(F) \big\| _{L^2(\Omega)}^2 &\leq&  p!  \sum_{i_1, \ldots, i_p=1}^m \big(\beta_{i_1 i_2\ldots i_p}\big)^2    \Theta_2     \sum_{\substack{ k_1,  k_2\in [p] \\  k_1 \neq k_2 }}   \left(\,\, \prod_{j\in [p] \setminus\{ k_1, k_2 \}} \nu(A_{i_j}) \, \right)  \nu\big( A_{i_{k_1}}\big) \nu\big( A_{i_{k_2}}\big) \\
&\leq& p!  \sum_{i_1, \ldots, i_p=1}^m \big(\beta_{i_1 i_2\ldots i_p}\big)^2    \Theta_3    \prod_{j\in[p]} \nu(A_{i_j}) = \Theta_3 \cdot  \| F \big\| _{L^2(\Omega)}^2 \,\, .
\end{eqnarray*}
Since $\big\{ I_p^W(f)\,:\, f\in\EE_p \big\}$ is dense in the $p$th Wiener chaos $\mathscr{H}_p$,   $R_n: \mathscr{H}_p\to L^2(\Omega)$ is a bounded linear operator with operator norm $\| R_n \| _\text{op} \leq \sqrt{\Theta_3}$ for each $n\in\N$.  Note the linearity follows from its definition $R_n(F) : = n\,  E\big[ F^{(n)} - F \big\vert\mathscr{G} \big] +p F$, $F\in\mathscr{H}_p$.

Now we define 
 $$ \mathscr{C}_p : = \bigg\{ \,  F\in \mathscr{H}_p \,: \, R_\infty(F) : = \lim_{n\to+\infty} R_n(F) \text{\quad is well defined in $L^2(\Omega)$} \bigg\}  \,\, . $$
It is easy to see that $\mathscr{C}_p$ is a dense linear subspace of $\mathscr{H}_p$ and  for each $f\in\EE_p$, $I_p^W(f)\in \mathscr{C}_p$ and $R_\infty(I_p^W(f)) = 0$. As 
$$\sup_{n\in\N} \| R_n \| _\text{op} \leq \sqrt{\Theta_3} < +\infty \,\, ,$$
$R_\infty$ has a unique extension to $\mathscr{H}_p$ and by density of $\big\{ I_p^W(f)\,:\, f\in\EE_p \big\}$  in $\mathscr{H}_p$, $R_\infty(F) = 0$ for each $F \in\mathscr{H}_p$. In other words, for any $F \in\mathscr{H}_p$,  $ n\,  E\big[ F^{(n)} - F \big\vert\mathscr{G} \big]$ converges in $L^2(\Omega)$ to $-pF$, as $n\to+\infty$.   
 \end{proof}

\section{Proof of Proposition \ref{azerty}}

We now give the proof of Proposition \ref{azerty}, which has been stated in the introduction. We restate it for the convenience of the reader.

\begin{prop-cont}
Let $(B,B^t)_{t\geq 0}$ be a family of exchangeable pairs of Brownian motions (that is, $B$ is a Brownian motion on $[0,1]$ and, for each $t$, one has $(B,B^t)\overset{\rm law}{=}(B^t,B)$). 
Assume moreover that
\begin{enumerate}
\item[(a)] for any integer $p\geq 1$ and any  $f\in L^2([0,1]^p)$,
 \begin{eqnarray*}
 \lim_{t\downarrow 0} \frac1t \, E\Big[ I_p^{B^t}(f) - I_p^B(f) \big| \sigma\{B\} \Big] =  -p\,I_p^{B}(f)\quad\mbox{in $L^2(\Omega)$}.
 \end{eqnarray*}
  \end{enumerate}
Then, for any integer $p\geq 1$ and any  $f\in L^2([0,1]^p)$,
\begin{enumerate}
      \item[(b)]
        ${\displaystyle \lim_{t\downarrow 0}   \frac1t\, E\Big[ \big( I_p^{B^t}(f) - I_p^{B}(f) \big)^2 | \sigma\{B\} \Big] =    2  p^2\int_0^1 I^B_{p-1}(f(x,\cdot))^2dx}$ \quad   in $L^2(\Omega)$;
        \item[(c)]
      ${\displaystyle \lim_{t\downarrow 0} \frac{1}{t}\, E\Big[ \big( I_p^{B^t}(f) - I_p^{B}(f) \big)^4  \Big]  =   0}$.
          \end{enumerate}
\end{prop-cont}  
\noindent
{\it Proof}. 
We first concentrate on the proof of (b). 
Fix $p\geq 1$ and  $f\in L^2([0,1]^p)$, and set 
$F= I_p^{B}(f) $ and $F_t= I_p^{B^t}(f)$.
First, we observe that
\begin{eqnarray*}
\frac{1}{t}\,E\big[(F_t-F)^2\big| \sigma\{B\}\big]
=\frac{1}{t}\,
E\big[F_t^2-F^2\big| \sigma\{B\}\big] - \frac{2}{t} F\,E\big[F_t-F\big| \sigma\{B\}\big].
\end{eqnarray*}
Also, as an immediate consequence of the product formula
(\ref{product}) and the definition of $f\otimes_r f$, we have
\begin{eqnarray*}
 p^2\int_0^1 I_{p-1}^B(f(x,\cdot))^2dx= \sum_{r=1}^{p}rr!\binom{p}{r}^2  I_{2p-2r}^{B}(f\otimes_r f).
\end{eqnarray*}
Given (a) and the previous two identities, in order to prove (b) we are thus left to check that
\begin{eqnarray}\label{toshow}
\lim_{t\downarrow 0} \frac{1}{t}\,
E\big[F_t^2-F^2\big| \sigma\{B\}\big] = -2p\, F^2 + 2\sum_{r=1}^{p}rr!\binom{p}{r}^2  I_{2p-2r}^{B}(f\otimes_{r}f)\quad\mbox{in $L^2(\Omega)$.}
\end{eqnarray}
The product formula (\ref{product}) used for multiple integrals 
with respect to $B^t$
(resp. $B$)
yields
\begin{eqnarray*}
F_t^2 = \sum_{r=0}^{p}r!\binom{p}{r}^2 I_{2p-2r}^{B^t}(f\otimes_r  f)\quad\mbox{\Big(resp. $\displaystyle{
F^2 = \sum_{r=0}^{p}r!\binom{p}{r}^2 I_{2p-2r}^{B}(f\otimes_r  f)\Big)}$}.
\end{eqnarray*}
Hence it follows from (a) that
\begin{eqnarray*}
\frac{1}{t}\,   E\big[F_t^2 - F^2\big| \sigma\{B\}\big] &=&
\sum_{r=0}^{p-1}r!\binom{p}{r}^2 \frac1t\,E[I_{2p-2r}^{B^t}(f\otimes_r  f)-
I_{2p-2r}^{B}(f\otimes_r  f)|\sigma\{B\}]\\
&\longrightarrow&
\sum_{r=0}^{p-1}r!\binom{p}{r}^2 (2r-2p) I_{2p-2r}^{B}(f\otimes_r  f)\\
&=& -2p(F^2-E[F^2]) + 2\sum_{r=1}^{p-1}rr!\binom{p}{r}^2  I_{2p-2r}^{B}(f\otimes_r  f),
\end{eqnarray*}
which is exactly (\ref{toshow}). The proof of (b) is complete.

Let us now turn to the proof of (c).
Fix $p\geq 1$ and $f\in L^2([0,1]^p)$, and 
set $F=I_p^B(f)$ and $F_t=I_p^{B^t}(f)$, $t\geq 0$.
We claim that the pair $(F,F_t)$ is exchangeable for each $t$.
Indeed, thanks to point 4 in Section \ref{sec1}, we first observe that it is enough to check this claim when $f$ belongs to $\mathcal{E}_p$, that is, when $f$ has the form
\begin{eqnarray*}
f(x_1,\ldots,x_p)=\sum_{i_1,\ldots,i_p=1}^k \beta_{i_1\ldots i_p}
{\bf 1}_{
[\tau_{i_1-1},\tau_{i_1})
\times\ldots\times 
[\tau_{i_p-1},\tau_{i_p})
}
(x_1,\ldots,x_p),
\end{eqnarray*} 
with $k\geq 1$ and $0=\tau_0<\tau_1<\ldots<\tau_k$, and the coefficients
$\beta_{i_1\ldots i_p}$ are zero if any two of the indices $i_1,\ldots,i_p$ are equal. But, for such an $f$, one has
\begin{eqnarray*}
F=I^B_p(f) &=& \sum_{i_1,\ldots,i_p=1}^k \beta_{i_1\ldots i_p}
(B(\tau_{i_1})-B(\tau_{i_1-1}))
\ldots
(B(\tau_{i_p})-B(\tau_{i_p-1}))\\
F_t=I^{B^t}_p(f) &=& \sum_{i_1,\ldots,i_p=1}^k \beta_{i_1\ldots i_p}
(B^t(\tau_{i_1})-B^t(\tau_{i_1-1}))
\ldots
(B^t(\tau_{i_p})-B^t(\tau_{i_p-1})),
\end{eqnarray*}
and the exchangeability of $(F,F_t)$ follows immediately from those of $(B,B^t)$.
Since the pair $(F,F_t)$ is exchangeable, we can write
    \begin{eqnarray}
  E\big[ (F_t - F)^4 \big] & =&   E\big[ F_t^4 + F^4 - 4 F_t^3F - 4 F^3 F_t + 6 F_t^2 F^2   \big]  \notag\\
  & = &2 E[ F^4] - 8 E\big[ F^3 F_t \big] +  6 E\big[ F^2 F_t^2    \big]  \quad\text{by exchangeability;} \notag\\
  & = &4 E\big[ F^3(F_t - F) \big]  +6 E\big[ F^2   (F_t - F)^2 \big] \quad\text{after rearrangement;}\notag \\
  & = &4 E\big[ F^3 E[ (F_t - F) \vert \sigma\{ B \} ] \big]  +6 E\big[ F^2 E[   (F_t - F)^2 \vert \sigma\{ B \} ] \big].\notag
    \end{eqnarray}   
     Dividing by $t$ and taking the limit $t\downarrow 0$ into the previous identity, we deduce, thanks to (a)  and (b) as well, that
\begin{eqnarray}\label{chantraine}
\lim_{t\downarrow 0} \frac{1}{t}\, E\Big[ \big( F_t -F \big)^4  \Big]  = -4p E[F^4]+12 p^2 \,E\left[F^2 \int_0^1 I^B_{p-1}(f(x,\cdot))^2dx\right].
\end{eqnarray}
In particular, it appears that the limit of $\frac{1}{t}\, E\big[ ( F_t -F )^4  \big]$ is always the same, irrespective of the choice of our exchangeable pair of Brownian motions $(B,B^t)$ satisfying (a). To compute it, we can then choose the pair $(B,B^t)$ we want, for instance, the pair constructed in Section \ref{sec-exch}. This is why, starting from now and for the rest of the proof, $(B,B^t)$ refers to the pair defined in Section \ref{sec-exch} (which satisfies (a), that is, (\ref{lili})).
What we gain by considering this particular pair is that it satisfies a hypercontractivity-type inequality. More precisely, there exists $c_{p}>0$ (only depending on $p$) such that, for all $t\geq 0$, 
\begin{eqnarray}\label{claim}
E[(F_t-F)^4]\leq c_{p} \,E[(F_t-F)^2]^2.
\end{eqnarray}
Indeed, going back to the definition of multiple Wiener-It\^o integrals
as given in Section \ref{sec1} (first for elementary functions and then by approximation for the general case), we see that $F_t-F$ is a multiple Wiener-It\^o integral of order $p$ with respect to the {\it two-sided} Brownian motion $B=(\overline{B}(s))_{s\in[-1,1]}$, defined as
\begin{eqnarray*}
\overline{B}(s)=B(s){\bf 1}_{[0,1]}(s)+\widehat{B}(-s){\bf 1}_{[-1,0]}(s).
\end{eqnarray*}
But product formula (\ref{product}) is also true for a two-sided Brownian motion, so the claim (\ref{claim}) follows from (\ref{hyper1}) applied to $\overline{B}$.
On the other hand, it follows from (b) that $\frac{1}{t}\,E\big[  (  F_t-F  )^2\big]$ converges to a finite number, as $t\downarrow 0$.
Hence, combining this fact with (\ref{claim}) yields
\begin{eqnarray*}
 \frac{1}{t}\, E\Big[ \big(  F_t  -  F \big)^4  \Big] &\leq  c_{p}\,    t\,    \left(   \dfrac{1}{t}\,E\Big[ \big(  F_t  -  F \big)^2  \Big]\right)^{2} \to 0 \, ,
 \end{eqnarray*}
   as $t\downarrow 0$.                  \qed

\begin{remark}\label{A-hyper} \begin{enumerate}
\item[(i)] {\rm
A byproduct of (\ref{chantraine}) in the previous proof is that
\begin{eqnarray}\label{4thmoment}
\frac13\big(E[F^4]-3\sigma^4\big) = E\left[F^2\left(p\int_0^1 I_{p-1}^B(f(x,\cdot))^2dx -  \sigma^2\right)\right].
\end{eqnarray}
Note \eqref{4thmoment} was originally obtained by chain rule, see \cite[equation (5.2.9)]{bluebook}.
}
\item[(ii)] {\rm 
As a consequence of (c) in Proposition \ref{azerty} , we have $\lim_{t\downarrow 0}\frac{1}{t}\, E\big[   \vert I_p^{B^t}(f)  -  I_p^B(f)  \vert^3  \big] =0$.  Indeed, 
\begin{eqnarray*}
  \frac{1}{t} \, E\Big[   \vert I_p^{B^t}(f)  -  I_p^B(f)  \vert^3  \Big] & \leq & \left(   \frac{1}{t} \,  E\Big[   \big(  I_p^{B^t}(f)  -  I_p^B(f) \big)^2  \Big]  \right)^\frac{1}{2}  \left(  \frac{1}{t}\,  E\Big[   \big(  I_p^{B^t}(f)  -  I_p^B(f) \big)^4  \Big]  \right)^\frac{1}{2}  \\
  & \to & 0 \,\,, \quad\text{as $t\downarrow 0$.}
\end{eqnarray*}
}
\item[(iii)]  {\rm    For any $r > 2$,  in view of \eqref{hyper2} and \eqref{claim},  there exists an absolute constant $c_{r,p}$ depending only on $p, r$ (but not on $f$) such that 
  \begin{eqnarray*} 
   E\big[  \vert  I_p^B(f) - I_p^{B^t}(f) \vert^r \big]  \leq c_{r,p}\,   E \big[\big( I_p^B(f) - I_p^{B^t}(f) \big)^2 \big]^{r/2} \,\, .
   \end{eqnarray*}
Moreover, if $F\in L^2(\Omega, \sigma\{B\}, P)$ admits a \emph{finite} chaos expansion, say, (for some $p\in\N$) $F = E[F] + \sum_{q=1}^p I_q^B(f_q)$,  and we set $F_t = E[F] + \sum_{q=1}^p I_q^{B^t}(f_q)$, then there exists some  absolute constant $C_{r,p}$ that only depends on $p$ and $r$ such that 
   $$ E\big[  \vert  F - F_t \vert^r \big]  \leq C_{r,p}\,   E \big[\big( F - F_t \big)^2 \big]^{r/2} \,\, .$$

}
\end{enumerate}
\end{remark}
  
  \section{Proof of E. Meckes' Theorem \ref{meckes}}

In this section, for sake of completeness and because our version slightly differs from the original one given in \cite[Theorem 2.1]{Meckes06}, we provide a proof of Theorem \ref{meckes}, which we restate here for convenience.

\begin{theorem-cont}[Meckes \cite{Meckes06}]
Let $F$ and a family of  random variables $(F_t)_{t\geq 0}$   be defined on a common probability space $(\Omega,\mathcal{F},P)$ such that $F_t\overset{law}{=} F$ for every $t\geq 0$. Assume that $F\in L^3(\Omega, \mathscr{G}, P)$ for some $\sigma$-algebra $\mathscr{G}\subset\mathcal{F}$  and that in $L^1(\Omega)$,
\begin{enumerate}
\item[(a)] ${\displaystyle \lim_{t\downarrow 0} \frac1t\,E[F_t-F|\mathscr{G}] = -\lambda\,F}$ for some $\lambda>0$,
\item[(b)] ${\displaystyle   \lim_{t\downarrow 0} \frac1t\,E[(F_t-F)^2|\mathscr{G}] = (2\lambda+S){\rm Var}(F) }$ for some random variable $S$,
\item[(c)] ${\displaystyle  \lim_{t\downarrow 0} \frac1t\,(F_t-F)^3=0}$.
\end{enumerate}
Then, with $N\sim N(0,{\rm Var}(F))$, 
\begin{eqnarray*}
d_{TV}(F,N)\leq \frac{E|S|}{\lambda}.
\end{eqnarray*} 
\end{theorem-cont}
\noindent
{\it Proof}.
Without loss of generality, we may and will assume that ${\rm Var} (F) = 1$.   It is known that 
\begin{eqnarray} \label{hah}
d_{TV}(F, N)   = \frac{1}{2} \sup   E\big[ \varphi(F) - \varphi(N) \big]  \,\, ,
\end{eqnarray}
where the supremum runs over all smooth functions $\varphi:\R\to\R$ with compact support and such that $\| \varphi\| _\infty \leq 1$. For such a $\varphi$, recall (see, e.g. \cite[Lemma 2.4]{CGS}) that
\begin{eqnarray*}
g(x) = e^{x^2/2} \int_{-\infty}^x  \big( \varphi(y) - E[\varphi(N)] \big) e^{-y^2/2} \, dy \,\, , \quad x\in\R,
\end{eqnarray*}
satisfies 
\begin{eqnarray}\label{steineq}
g'(x) - xg(x) = \varphi(x) - E[ \varphi(N) ]
\end{eqnarray}
as well as $\| g \| _\infty \leq  \sqrt{2\pi} $, $\| g'\| _\infty \leq 4$ and $\| g'' \| _\infty\leq 2 \| \varphi' \| _\infty < +\infty$.
In what follows, we fix such a pair $(\varphi, g)$ of functions.  Let $G$ be a differentiable function such that $G' = g$, then due to $F_t \overset{\rm law}{=} F$,  it follows from the Taylor formula in mean-value form that
\begin{eqnarray*}
 0  =  E\big[ G(F_t) - G(F) \big]   =  E\big[ g(F)(F_t - F) \big] + \frac{1}{2}\,  E\big[ g'(F)(F_t - F)^2 \big] + E[ R]  \,\, ,
 \end{eqnarray*}
 with remainder $R$ bounded by $\frac{1}{6} \| g'' \| _\infty \, \vert F_t - F \vert^3$. 
 
By assumption (c) and as $t\downarrow 0$,
\begin{eqnarray*}
\left\vert \frac{1}{t} \, E [ R   ]\right\vert  \leq  \frac{1}{6} \, \| g'' \| _\infty \, \frac{1}{t} \, E\big[  \vert F_t - F \vert^3  \big] \to 0 .
\end{eqnarray*}
Therefore as $t\downarrow 0$, assumptions (a) and (b) imply that 
 \begin{eqnarray*}
 \lambda \,  E\big[ g'(F) - Fg(F) \big] + \frac{1}{2} \, E\big[ g'(F)S \big] = 0.
  \end{eqnarray*}
Plugging this into Stein's equation (\ref{steineq}) and then using \eqref{hah}, we deduce the desired conclusion, namely, 
 \begin{eqnarray*}
 d_{TV}(F, N) \leq \frac{1}{2} \frac{\| g' \| _\infty}{2\lambda} E| S |  \leq \frac{E|S|}{\lambda}.
 \end{eqnarray*}
\qed

\begin{remark} {\rm  
Unlike the original Meckes' theorem, we do not assume the exchangeability condition  $(F_t, F)\overset{law}{=} (F, F_t)$ in our Theorem \ref{meckes}.  Our consideration is motivated by  \cite{Roellin}. 
 }

\end{remark}

\section{Quantitative fourth moment theorem revisited via exchangeable pairs}

We give an elementary proof to the quantitative fourth moment theorem, that is, we explain how to prove the inequality of Theorem \ref{np-np}(ii) by means of our exchangeable pairs approach.
For sake of convenience, let us restate this inequality: for any multiple Wiener-It\^o integral $F$ of order $p\geq 1$ such that $E[F^2]= \sigma^2>0$, we have, with $N\sim N(0,\sigma^2)$,
\begin{eqnarray}\label{nourdinpeccati-stat}
d_{TV}(F, N)\leq  \frac{2}{\sigma^2}\sqrt{\frac{p-1}{3p}}\, \sqrt{E[F^4]-3\sigma^4}.
\end{eqnarray}

To prove (\ref{nourdinpeccati-stat}), we consider, for instance, the  exchangeable pairs of Brownian motions $\{(B,B^t)\}_{t>0}$  constructed in Section \ref{sec-exch}.
We deduce, by combining Proposition \ref{azerty} with Theorem \ref{meckes} and   Remark \ref{A-hyper}-(ii), that
\begin{eqnarray}\label{nourdinpeccati2}
 d_{TV}(F,N)&\leq& \frac{2}{\sigma^2}\,E\left[\left|p\int_0^1 I^B_{p-1}(f(x,\cdot))^2dx-\sigma^2\right|\right].
 \end{eqnarray}
To deduce (\ref{nourdinpeccati-stat}) from (\ref{nourdinpeccati2}), we are thus left to prove the following result.
\begin{prop}\label{proof4}
Let $p\geq 1$ and consider a symmetric function $f\in L^2([0,1]^p)$. 
Set $F=I_p^B(f)$ and $\sigma^2=E[F^2]$. Then
\begin{eqnarray*}
E\left[\left(p\int_0^1 I^B_{p-1}(f(x,\cdot))^2dx-\sigma^2\right)^2\right]
\leq \frac{p-1}{3p}\big(E[F^4]-3\sigma^4).
\end{eqnarray*}
\end{prop}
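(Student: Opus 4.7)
The plan is to establish the bound by expanding both sides in the chaos decomposition, using the product formula, and then comparing coefficients term-by-term.

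First, I would exploit the identity already identified in Remark \ref{A-hyper}(i), namely
\begin{eqnarray*}
\tfrac{1}{3}(E[F^4]-3\sigma^4) = E\left[F^2\left(G-\sigma^2\right)\right],
\end{eqnarray*}
where $G := p\int_0^1 I_{p-1}^B(f(x,\cdot))^2\,dx$. Since $E[G]=\sigma^2$, the right-hand side equals $E[(F^2-\sigma^2)(G-\sigma^2)]$. So the claim reduces to showing that
\begin{eqnarray*}
E\big[(G-\sigma^2)^2\big]\leq \frac{p-1}{p}\,E\big[(F^2-\sigma^2)(G-\sigma^2)\big],
\end{eqnarray*}
which I expect to prove by a chaos-by-chaos comparison.

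Next I would apply the product formula \eqref{product} to $I_{p-1}^B(f(x,\cdot))^2$ and then use stochastic Fubini (valid since $\int_0^1 f(x,\cdot)\otimes_r f(x,\cdot)\,dx = f\otimes_{r+1}f$) to obtain, after the reindexing $s=r+1$,
\begin{eqnarray*}
G-\sigma^2 = p\sum_{s=1}^{p-1}(s-1)!\binom{p-1}{s-1}^{2}I_{2p-2s}^{B}(f\otimes_s f),
\end{eqnarray*}
where the $s=p$ term produces exactly the constant $\sigma^2$ which cancels. Similarly,
\begin{eqnarray*}
F^2-\sigma^2 = \sum_{r=1}^{p-1} r!\binom{p}{r}^{2}I_{2p-2r}^{B}(f\otimes_r f) \;+\; (2p)!\text{-order term at }r=0,
\end{eqnarray*}
(where the $r=0$ term, of chaos order $2p$, plays no role below since $G-\sigma^2$ has no $I_{2p}$-component).

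The key computation is then the Wiener isometry applied to the two quadratic expressions. This yields
\begin{eqnarray*}
E\big[(G-\sigma^2)^2\big] &=& p^2\sum_{s=1}^{p-1}\big((s-1)!\big)^{2}\binom{p-1}{s-1}^{4}(2p-2s)!\,\|f\widetilde{\otimes}_s f\|^{2},\\
E\big[(F^2-\sigma^2)(G-\sigma^2)\big] &=& p\sum_{s=1}^{p-1} s!(s-1)!\binom{p}{s}^{2}\binom{p-1}{s-1}^{2}(2p-2s)!\,\|f\widetilde{\otimes}_s f\|^{2}.
\end{eqnarray*}
Using the elementary identity $\binom{p-1}{s-1}=\frac{s}{p}\binom{p}{s}$, the $s$th coefficient of the first sum divided by the $s$th coefficient of the second sum simplifies exactly to $s/p$.

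Finally, since $1\leq s\leq p-1$, we have $s/p\leq (p-1)/p$ for every index, so
\begin{eqnarray*}
E\big[(G-\sigma^2)^2\big]\leq \frac{p-1}{p}\,E\big[(F^2-\sigma^2)(G-\sigma^2)\big]
= \frac{p-1}{p}\cdot\frac{1}{3}\big(E[F^4]-3\sigma^4\big),
\end{eqnarray*}
which is the desired inequality. The only slightly delicate step is the stochastic Fubini/contraction identity $\int_0^1 f(x,\cdot)\otimes_r f(x,\cdot)\,dx=f\otimes_{r+1}f$ together with the bookkeeping of binomial coefficients; everything else is a direct application of the product formula and the Wiener isometry.
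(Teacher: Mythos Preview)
Your proof is correct and follows essentially the same route as the paper: both arguments expand $G-\sigma^2$ and $F^2$ via the product formula, apply the Wiener isometry, and compare the resulting sums term by term, observing that the ratio of coefficients at level $s$ is exactly $s/p\leq (p-1)/p$. The only cosmetic difference is that the paper simplifies the coefficient of $G-\sigma^2$ to $\tfrac{r}{p}\,r!\binom{p}{r}^2$ up front (rather than carrying $p(s-1)!\binom{p-1}{s-1}^2$ and converting later), which makes the final comparison a bit more transparent.
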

\noindent
{\it Proof}. Using the product formula (\ref{product}), we can write
\begin{eqnarray*}
F^2 = \sum_{r=0}^p r!\binom{p}{r}^2 I^B_{2p-2r}(f\otimes_r f) = \sigma^2+\sum_{r=0}^{p-1} r!\binom{p}{r}^2 I^B_{2p-2r}(f\otimes_r f),
\end{eqnarray*}
as well as
\begin{eqnarray*}
&&p\int_0^1 I^B_{p-1}(f(x,\cdot))^2dx=p\sum_{r=0}^{p-1} r!\binom{p-1}{r}^2 I^B_{2p-2r-2}\left(\int_0^1 f(x,\cdot)\otimes_r f(x,\cdot)dx\right)\\
&=&p\sum_{r=1}^{p} (r-1)!\binom{p-1}{r-1}^2 I^B_{2p-2r}\left(f\otimes_r f\right)
=\sigma^2 + \sum_{r=1}^{p-1} \frac{r}{p}\,r!\binom{p}{r}^2 I^B_{2p-2r}\left(f\otimes_r f\right).
\end{eqnarray*}
Hence, by the isometry property (point 2 in Section \ref{sec1}),
\begin{eqnarray*}
E\left[\left(p\int_0^1 I^B_{p-1}(f(x,\cdot))^2dx-\sigma^2\right)^2\right]
=\sum_{r=1}^{p-1} \frac{r^2}{p^2}\,r!^2\binom{p}{r}^4 (2p-2r)! \|f\widetilde{\otimes}_r f\|^2_{L^2([0,1]^{2p-2r})}.
\end{eqnarray*}
On the other hand, one has from (\ref{4thmoment}) and the isometry property again that
\begin{eqnarray*}
\frac13\big(E[F^4]-3\sigma^4\big)&=& E\left[ F^2 \left(p\int_0^1 I^B_{p-1}(f(x,\cdot))^2dx-\sigma^2\right)\right]\\
&=&
\frac13\big(E[F^4]-3\sigma^4\big)=  \sum_{r=1}^{p-1} \frac{r}{p}\,r!^2\binom{p}{r}^4 (2p-2r)! \|f  \widetilde{\otimes}_r f\|^2_{L^2([0,1]^{2p-2r})}.
\end{eqnarray*}
The desired conclusion follows.    \qed

\section{Connections with   Malliavin operators}

Our main goal in this paper is to provide an elementary proof of Theorem \ref{np-np}(ii). Nevertheless, in this section we further investigate the connections we have found between our exchangeable pair approach and the operators of Malliavin calculus.
This part may be skipped in a first reading, as it is not used in other sections.
It is directed to readers who are already familiar with Malliavin calculus. We use classical notation and so do not introduce them in order to save place. We refer to \cite{Nualart06} for any missing detail.

In this section, to stay on the safe side we only consider random variables $F$ belonging to
\begin{eqnarray}\label{AAA}
 \mathcal{A} : = \bigcup_{p\in\N} \bigoplus_{r\leq p} \mathscr{H}_r \,\, ,
\end{eqnarray}
where $\mathscr{H}_r$ is the $r$th chaos associated to the Brownian motion $B$.
In other words, we only consider random variables that are $\sigma\{ B \}$-measurable  and that admit a {\it finite} chaotic expansion.
Note that $\mathcal{A}$ is an algebra (in view of product formula) that is dense 
in $L^2\big(\Omega, \sigma\{B\}, P\big)$.

As is well-known, any $\sigma\{ B \}$-measurable random variable $F$ can be written $F = \psi_F(B)$ for some measurable mapping $\psi_F:\R^{\R_+}\to \R$ determined $P\circ B^{-1}$ almost surely. For such an $F$, we can then define $F_t=\psi_F(B^t)$, with $B^t$ defined in Section \ref{sec-exch}.
Another equivalent description of $F_t$ is to define it as
$
F_t = E[F]+\sum_{r= 1}^p I_r^{B^t}(f_r), 
$
if the family $(f_r)_{1\leq r\leq p}$ is such that $F = E[F]+\sum_{r= 1}^p I_r^{B}(f_r)$.

Our main findings are summarized in the statement below.

\begin{prop} \label{Bettembourg2}
Consider $F,G\in\mathcal{A}$, and define $F_t,G_t$ for each $t\in\R_+$ as is done above.
Then, in $L^2(\Omega)$, 
\begin{enumerate}
\item[(a)] ${\displaystyle \lim_{t\downarrow 0}
       \frac1t \, E\Big[ F_t - F \big| \sigma\{B\} \Big] =  LF}$,
 \item[(b)]  ${ \displaystyle \lim_{t\downarrow 0}
         \frac1t\, E\Big[ \big( F_t - F \big)(G_t - G) | \sigma\{B\} \Big] =   L(FG) - FLG - GLF= 2\, \langle DF, DG \rangle }$.
       \end{enumerate}
\end{prop}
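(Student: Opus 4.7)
The reduction to Proposition \ref{azerty}(a) is immediate via the finite chaos expansion. Since $F \in \mathcal{A}$, write $F = E[F] + \sum_{r=1}^{p} I_r^B(f_r)$ for some integer $p$ and symmetric kernels $f_r \in L^2([0,1]^r)$. By definition of $F_t$, one has $F_t - F = \sum_{r=1}^{p}\bigl(I_r^{B^t}(f_r) - I_r^B(f_r)\bigr)$. Applying Proposition \ref{azerty}(a) to each of the finitely many terms (so the $L^2$-convergence passes through the sum) and using $LI_r^B(f_r) = -r\,I_r^B(f_r)$, we obtain
$$\frac{1}{t}\,E\big[F_t - F \,\big|\, \sigma\{B\}\big] \longrightarrow -\sum_{r=1}^{p} r\, I_r^B(f_r) = LF \qquad \text{in } L^2(\Omega).$$

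\textbf{Plan for (b), first equality.} The key algebraic rearrangement is
$$F_t G_t - FG = (F_t - F)(G_t - G) + F(G_t - G) + G(F_t - F),$$
so that, after conditioning on $\sigma\{B\}$ and dividing by $t$,
$$\frac{1}{t}\,E\big[(F_t - F)(G_t - G) \,\big|\, \sigma\{B\}\big] = \frac{1}{t}\,E\big[F_tG_t - FG \,\big|\, \sigma\{B\}\big] - \frac{F}{t}\,E\big[G_t - G \,\big|\, \sigma\{B\}\big] - \frac{G}{t}\,E\big[F_t - F \,\big|\, \sigma\{B\}\big].$$
Since $B^t$ is itself a Brownian motion (Lemma \ref{exchan-b}), the product formula \eqref{product} applied to $B^t$ yields the same combinatorial expression of kernels as applied to $B$; this in turn gives both $FG \in \mathcal{A}$ and $(FG)_t = F_t G_t$. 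Applying part (a) to $FG$ therefore sends the first term on the right to $L(FG)$ in $L^2(\Omega)$. Applying part (a) to $F$ and $G$ separately, and using that $F, G$ have all moments via the hypercontractivity bound \eqref{hyper2} (so that multiplication by $F$ or $G$ preserves $L^2$-convergence), sends the last two terms to $-F\cdot LG$ and $-G\cdot LF$. Summing proves the first equality.

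\textbf{Plan for (b), second equality.} For the identity $L(FG) - FLG - GLF = 2\langle DF, DG\rangle$, I would invoke the classical Malliavin operator relations $L = -\delta D$ and $\delta(Hu) = H\delta u - \langle DH, u\rangle$, both valid on $\mathcal{A}$. Combined with the Leibniz rule $D(FG) = F\,DG + G\,DF$, one has
$$L(FG) = -\delta(F\,DG + G\,DF) = -F\,\delta(DG) + \langle DF, DG\rangle - G\,\delta(DF) + \langle DG, DF\rangle = FLG + GLF + 2\langle DF, DG\rangle,$$
which yields the claim. The only delicate step in the whole argument is the verification of $(FG)_t = F_t G_t$, which is essentially tautological once the product formula is recognized as a combinatorial identity valid for any Brownian motion; all remaining $L^2$-manipulations are harmless because elements of $\mathcal{A}$ lie in every $L^p$.
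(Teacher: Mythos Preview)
Your proof is correct and follows essentially the same route as the paper: part (a) by linearity of the chaos expansion and (\ref{lili}), part (b) by the same algebraic rearrangement followed by an application of (a) to $FG$, $F$, $G$, and finally the identity $L(FG)-FLG-GLF=2\langle DF,DG\rangle$ via $L=-\delta D$, the Leibniz rule and the product rule for $\delta$. You are slightly more explicit than the paper in two places---you spell out $(FG)_t=F_tG_t$ via the product formula and you invoke hypercontractivity to justify that multiplication by $F,G$ preserves $L^2$-convergence---but these are elaborations of the same argument rather than a different approach.
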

\noindent
{\it Proof.}  The proof of (a) is an immediate consequence of (\ref{lili}), the linearity of conditional expectation, and the fact that $LI_r^B(f_r)=-r\,I_r^B(f_r)$ by definition of $L$.
  Let us now turn to the proof of (b). Using elementary algebra and then (a), we deduce that, as $t\downarrow 0$ and in $L^2(\Omega)$,
\begin{eqnarray*}
&& \frac{1}{t}\,E\big[(F_t-F)(G_t- G)\big| \sigma\{B\}\big]  \\
& =& \frac{1}{t}\, E\big[F_tG_t-FG\big| \sigma\{B\}\big] - \frac{1}{t} F\,E\big[G_t - G\big| \sigma\{W\}\big] - \frac{1}{t} G\, E\big[F_t - F\big| \sigma\{B\}\big]  \\
& \to& L(FG) - FLG - GLF \,\, .
\end{eqnarray*}
Using $L = -\delta D$, $D(FG) = FDG + GDF$ (Leibniz rule) and $\delta (FDG) = F \delta (DG) - \langle DF, DG \rangle$ (see \cite[Proposition 1.3.3]{Nualart06}), it is easy to check that
$L(FG) - FLG - GLF  =  2 \langle DF, DG \rangle$, which concludes the proof of Proposition \ref{Bettembourg2}.\qed

\begin{rem}\label{BELVAL}
{\rm
The expression appearing in the right-hand side of (b) is nothing else but  $2\, \Gamma(F, G)$,  the (doubled) carr\'e du champ operator.  
}
\end{rem}

 To conclude this section, we show how our approach allows to recover  the diffusion property of the Ornstein-Uhlenbeck operator.
 
\begin{prop} Fix $d\in\N$, let $F = (F_1, \ldots, F_d)\in \mathcal{A}^d$ (with $\mathcal{A}$ given in \eqref{AAA}), and $\Psi: \R^d\to \R$ be a polynomial function.  Then
 \begin{eqnarray}
 L \Psi(F) = \sum_{j=1}^d \partial_j \Psi(F) LF_j+ \sum_{i,j=1}^d \partial_{ij}\Psi(F) \langle DF_i, DF_j \rangle\,\, . \label{Diffusion}
 \end{eqnarray}
\end{prop}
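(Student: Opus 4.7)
The plan is to apply Proposition \ref{Bettembourg2}(a) to the random variable $\Psi(F)$ itself, and to match the resulting limit with the one obtained by Taylor-expanding $\Psi$. Since $\Psi$ is polynomial and $\mathcal{A}$ is an algebra (by the product formula), $\Psi(F)\in\mathcal{A}$, so it has a finite chaos expansion. Moreover, by the very definition of $(\cdot)_t$ through $\psi_{(\cdot)}(B^t)$, one has $\psi_{\Psi(F)}=\Psi\circ(\psi_{F_1},\ldots,\psi_{F_d})$, hence $\Psi(F)_t=\Psi(F_t)$ where $F_t:=(F_{1,t},\ldots,F_{d,t})$. Therefore Proposition \ref{Bettembourg2}(a) gives, in $L^2(\Omega)$,
\begin{equation*}
L\Psi(F)=\lim_{t\downarrow 0}\frac{1}{t}E\bigl[\Psi(F_t)-\Psi(F)\,\big|\,\sigma\{B\}\bigr].
\end{equation*}

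Next, I would write out a second-order Taylor expansion of $\Psi$ at $F$:
\begin{equation*}
\Psi(F_t)-\Psi(F)=\sum_{j=1}^d\partial_j\Psi(F)(F_{j,t}-F_j)+\tfrac{1}{2}\sum_{i,j=1}^d\partial_{ij}\Psi(F)(F_{i,t}-F_i)(F_{j,t}-F_j)+R_t,
\end{equation*}
which, because $\Psi$ is polynomial, is an exact algebraic identity in which $R_t$ is a finite linear combination of monomials containing at least three factors of the form $(F_{j,t}-F_j)$ and otherwise only polynomial expressions in the components of $F$ and $F_t$. Dividing by $t$, taking conditional expectation given $\sigma\{B\}$, and using that $\partial_j\Psi(F)$ and $\partial_{ij}\Psi(F)$ are $\sigma\{B\}$-measurable, Proposition \ref{Bettembourg2}(a) applied componentwise turns the first sum into $\sum_j\partial_j\Psi(F)LF_j$, while part (b) turns the second sum into $\sum_{i,j}\partial_{ij}\Psi(F)\langle DF_i,DF_j\rangle$. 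Matching this with the limit displayed above yields \eqref{Diffusion}.

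The main obstacle is the control of the remainder $R_t$. One has to show that $t^{-1}E[|R_t|\mid\sigma\{B\}]\to 0$ in a strong enough sense (say, $L^1$, which is enough since the other limits are known to exist). The crucial input is Remark \ref{A-hyper}(iii): because each $F_j$, and hence each $F_{j,t}=\psi_{F_j}(B^t)$, belongs to a finite sum of Wiener chaoses, the hypercontractivity-type bound gives $E[|F_{j,t}-F_j|^r]\leq C_{r,p}E[(F_{j,t}-F_j)^2]^{r/2}=O(t^{r/2})$ for every $r>2$, where the last equality uses Proposition \ref{azerty}(b). The remaining polynomial factors appearing in $R_t$ have moments of all orders that are bounded uniformly in $t\in[0,1]$, because $F_{j,t}\overset{\mathrm{law}}{=}F_j$. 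Applying Hölder's inequality monomial by monomial then gives $E[|R_t|]=O(t^{3/2})$, so $t^{-1}R_t\to 0$ in $L^1(\Omega)$, completing the argument.
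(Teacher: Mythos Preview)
Your proof is correct and follows essentially the same route as the paper's: both identify $L\Psi(F)$ via Proposition~\ref{Bettembourg2}(a) applied to $\Psi(F)\in\mathcal{A}$, Taylor-expand $\Psi(F_t)-\Psi(F)$ to second order, invoke parts (a) and (b) of Proposition~\ref{Bettembourg2} for the first- and second-order terms, and kill the remainder via H\"older together with the hypercontractivity bound of Remark~\ref{A-hyper}(iii) and $E[(F_{j,t}-F_j)^2]=O(t)$. The only cosmetic differences are that the paper writes the remainder in integral form while you use the exact finite Taylor expansion for polynomials, and that you make the identity $\Psi(F)_t=\Psi(F_t)$ explicit; neither changes the substance of the argument.
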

 \noindent
 {\it Proof.} We first define $F_t = (F_{1,t}, \ldots, F_{d,t})$ as explained in the beginning of the present section. Using classical multi-index notations, Taylor formula yields that 
\begin{eqnarray}
\Psi(F_t) - \Psi(F) &= &\sum_{j=1}^d \partial_j \Psi(F) \big( F_{j,t} - F_j \big) + \frac{1}{2} \sum_{i,j=1}^d \partial_{i,j} \Psi(F) \big( F_{j,t} - F_j \big) \big( F_{i,t} - F_i \big) \notag \\
& &   + \sum_{\vert\beta\vert = 3}  \frac{3}{\beta_1! \ldots \beta_d!}  (F_t - F)^\beta  \int_0^1 (1-s)^k \big(\partial_1^{\beta_1}\ldots\partial_d^{\beta_d} \Psi\big)\big( F + s(F_t - F)\big) \, ds \, . \label{BBB}
\end{eqnarray} 
In view of the previous proposition, the only difficulty in establishing \eqref{Diffusion} is about controlling the last term in \eqref{BBB} while passing $t\downarrow 0$. Note  first $\big(\partial_1^{\beta_1}\ldots\partial_d^{\beta_d} \Psi\big)\big( F + s(F_t - F)\big)$ is polynomial in $F$ and $(F_t - F)$, so our problem reduces to show 
  \begin{eqnarray}
    \lim_{t\downarrow 0} \frac{1}{t}  E\big[  \vert  F^\alpha   (F_t - F)^\beta \vert \big] = 0 \, , \label{KAWA}
   \end{eqnarray}
for $\alpha = ( \alpha_1, \ldots, \alpha_d), \beta = (\beta_1, \ldots, \beta_d)\in \big( \N \cup \{0\}\big)^d$ with $| \beta | \geq 3$. 

Indeed, (assume $\beta_j > 0$ for each $j$)
\begin{eqnarray*}
  \frac{1}{t}  E\big[  \vert  F^\alpha   (F_t - F)^\beta \vert \big]  & \leq &\frac{1}{t} E\big[  \vert  F^\alpha  \vert^2 \big]^{1/2}  E\big[  \vert    (F_t - F)^\beta \vert^2 \big]^{1/2} \quad\text{by Cauchy-Schwarz inequality;} \\
 &  \leq &  E\big[  \vert  F^\alpha  \vert^2 \big]^{1/2} \, \frac{1}{t} \,    \left(  \prod_{j=1}^d  E\Big[        (F_{j,t} - F_j)^{2\vert \beta \vert} \Big]^{\frac{\beta_j}{ | \beta |}} \right)^{1/2} \quad\text{by H\"older inequality;}  \\
  &  \leq & C \,   E\big[  \vert  F^\alpha  \vert^2 \big]^{1/2} \,  t^{\frac{|\beta|}{2} - 1} \,  \left(  \prod_{j=1}^d \frac{1}{t^{\beta_j}} E\Big[        (F_{j,t} - F_j)^{2} \Big]^{\beta_j} \right)^{1/2} \,\,,
\end{eqnarray*}
where    the last inequality follows from point-(iii) in Remark \ref{A-hyper} with $C > 0$ independent of $t$.   Since $F^\alpha\in\mathcal{A}$ and $| \beta | \geq 3$, \eqref{KAWA} follows  immediately from the above inequalities. 
\qed

\bigskip

\section{Peccati-Tudor theorem revisited too}

In this section, we combine a multivariate version of Meckes' abstract exchangeable pairs \cite{Meckes09} with our results from Section \ref{sec-exch} to prove (\ref{NPR-exch}), thus leading to a fully elementary proof of Theorem \ref{NPRR-thm} as well.

First, we recall the following multivariate version of Meckes' theorem (see  \cite[Theorem 4]{Meckes09}). Unlike in the one-dimensional case,  it seems inevitable to impose the exchangeability condition in the following proposition, as we read from its proof in  \cite{Meckes09}.
 
 \begin{prop}\label{Mec09}
For each $t > 0$, let $(F,F_t)$ be an exchangeable pair of centered  $d$-dimensional random vectors defined on a common probability space. Let $\mathcal{G}$ be a $\sigma$-algebra that contains $\sigma\{F\}$. Assume that $\Lambda\in\R^{d\times d}$ is an invertible deterministic matrix and $\Sigma$ is a symmetric, non-negative definite deterministic matrix such that 
\begin{enumerate}
\item[(a)]
$ {\displaystyle \lim_{t\downarrow 0} \frac1t\,  E\big[ F_t - F |\mathcal{G} \big] =  - \Lambda Y}$ in $L^1(\Omega)$,
\item[(b)]
${\displaystyle
 \lim_{t\downarrow 0} \frac1t\, E\big[ ( F_t - F)(F_t- F)^T |\mathscr{G} \big] =  2\Lambda \Sigma + S }$ in $L^1(\Omega, \| \cdot \| _{\text{HS}})$ for some   matrix $S=S(F)$, and with $\| \cdot \| _{\text{HS}}$ the Hilbert-Schmidt norm
 \item[(c)]
${ \displaystyle \lim_{t\downarrow 0} \sum_{i=1}^d \frac{1}{t}\, E\big[  \vert F_{i,t} - F_i \vert^3 \big] =  0}$, where $F_{i,t}$ (resp. $F_i$) stands for the $i$th coordinate of $F_t$ (resp. $F$).
\end{enumerate}
Then, with $N\sim N_d(0,\Sigma)$, 
\begin{itemize}
\item[(1)] for $g\in C^2(\R^d)$, 
\begin{eqnarray*}
\big| E  [ g(F) ] - E [g(N)  ] \big| \leq  \frac{ \| \Lambda^{-1} \| _{\text{op}}  \sqrt{d}\, M_2(g)  }{4} E \left[\,\,  \sqrt{ \sum_{i,j=1}^d S_{ij}^2 } \,\, \right],
\end{eqnarray*}
where $M_2(g) : = \sup_{x\in\R^d} \big\| D^2 g(x) \big\|_\text{op}$ with $\| \cdot \| _{\text{op}}$ the operator norm.
\item[(2)] if, in addition, $\Sigma$ is positive definite, then 
\begin{eqnarray*}
d_\text{W} (F, N) \leq  \frac{ \| \Lambda^{-1} \| _{\text{op}}  \|  \Sigma^{-1/2} \| _\text{op} }{\sqrt{2\pi}} \,E \left[\,\,  \sqrt{ \sum_{i,j=1}^d S_{ij}^2 } \,\, \right].  \end{eqnarray*}
\end{itemize}
 
 \end{prop}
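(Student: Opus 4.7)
The plan is to lift the argument used in the proof of Theorem \ref{meckes} (Section 6) from scalars to matrices. For a test function $g \in C^2(\R^d)$, I would first introduce $f = f_g$ as a solution of the multivariate Stein equation
\[ \big\langle \Lambda \Sigma,\, \mathrm{Hess}(f)(x)\big\rangle_\mathrm{HS} - x^T \Lambda^T \nabla f(x) = g(x) - E[g(N)], \]
which is obtained by integrating against a generalized Ornstein--Uhlenbeck semigroup $T_s h(x) = E[h(e^{-\Lambda s}x + V_s)]$, where $V_s$ is a centered Gaussian vector tuned so that $N \sim N(0,\Sigma)$ is invariant, namely $f_g(x) = -\int_0^\infty \big(T_s g(x) - E[g(N)]\big)\, ds$. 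Routine differentiation under the integral together with Gaussian integration by parts then yields the crucial derivative bound $M_2(f_g) \leq \tfrac{1}{2}\,\|\Lambda^{-1}\|_\mathrm{op}\, M_2(g)$.

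Next, exchangeability of $(F,F_t)$ gives $E[f_g(F_t)-f_g(F)] = 0$ for every $t>0$. A second-order Taylor expansion, followed by conditioning on $\mathcal{G}$, division by $t$, and passage to the limit yields
\[ 0 = -E\big[(\Lambda F)^T \nabla f_g(F)\big] + E\big[\big\langle \Lambda\Sigma + \tfrac{1}{2}S,\, \mathrm{Hess}(f_g)(F)\big\rangle_\mathrm{HS}\big]: \]
the linear and quadratic terms converge by (a) and (b) (using boundedness of $\nabla f_g$ and $\mathrm{Hess}(f_g)$ and dominated convergence), and the cubic Taylor remainder $E[R_t]/t$ vanishes via (c). The Stein equation for $f_g$ then collapses the first two groups of terms into $E[g(F)] - E[g(N)]$, leaving the identity
\[ E[g(F)]-E[g(N)] = -\tfrac{1}{2}\, E\big[\big\langle S,\, \mathrm{Hess}(f_g)(F)\big\rangle_\mathrm{HS}\big]. \]
Cauchy--Schwarz in the Hilbert--Schmidt inner product, combined with $\|\mathrm{Hess}(f_g)(F)\|_\mathrm{HS} \leq \sqrt{d}\,\|\mathrm{Hess}(f_g)(F)\|_\mathrm{op} \leq \sqrt{d}\,M_2(f_g)$ and the derivative bound on $f_g$, then gives part (1).

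For part (2), I would smooth a generic Lipschitz test function by a small Gaussian convolution: given $g$ with $\|g\|_\mathrm{Lip}\leq 1$, set $g_\sigma(x) = E[g(x + \sigma \Sigma^{1/2}Z)]$ with $Z\sim N(0,I_d)$ independent of $F$; then $\|g-g_\sigma\|_\infty = O(\sigma)$ while the positive definiteness of $\Sigma$ produces $M_2(g_\sigma) \leq c\,\|\Sigma^{-1/2}\|_\mathrm{op}/\sigma$. Applying part (1) to $g_\sigma$ and optimizing in $\sigma$ gives the Wasserstein bound, with the sharp Gaussian-smoothing constant $1/\sqrt{2\pi}$. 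The main technical obstacle lies in rigorously controlling the cubic remainder $E[R_t]/t$ when $g$ is only $C^2$: the naive bound $|R_t|\leq c\,M_3(f_g)\,|F_t-F|^3$ requires bounded third derivatives of $f_g$, which in turn forces a non-degeneracy assumption on $\Sigma$ (so that an extra integration by parts can be performed along the semigroup $T_s$). The cleanest way around this is to first prove the identity for $g\in C_c^\infty$ and then extend to general $g\in C^2$ with $M_2(g)<\infty$ by mollification, keeping $M_2$ uniformly bounded along the approximation.
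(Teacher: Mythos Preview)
The paper does not actually prove Proposition~\ref{Mec09}: it is quoted from Meckes \cite[Theorem~4]{Meckes09}, with the constant in (2) taken from D\"obler \cite{Doebler}. So the relevant comparison is between your sketch and Meckes' original argument.

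Your route genuinely differs from Meckes'. You build $\Lambda$ into the Stein equation itself, solving $\langle \Lambda\Sigma,\mathrm{Hess}\,f\rangle_{\mathrm{HS}}-x^T\Lambda^T\nabla f=g-E[g(N)]$ via a generalized OU semigroup, and then use only the weak identity $E[f_g(F_t)]=E[f_g(F)]$ (which needs just $F_t\overset{\rm law}{=}F$, not exchangeability). Meckes instead keeps the \emph{standard} Stein equation for $N(0,\Sigma)$ and uses the full exchangeability through the antisymmetry identity $E\big[(F_t-F)^T(\Lambda^{-1})^T(\nabla f(F_t)+\nabla f(F))\big]=0$; this is what lets $\|\Lambda^{-1}\|_{\rm op}$ appear as a clean multiplicative factor. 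The paper even flags this point: ``it seems inevitable to impose the exchangeability condition in the following proposition, as we read from its proof in \cite{Meckes09}.''

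There is a real gap in your approach for general $\Lambda$. First, the semigroup $T_sh(x)=E[h(e^{-\Lambda s}x+V_s)]$ with invariant law $N(0,\Sigma)$ requires $V_s$ to be Gaussian with covariance $\Sigma-e^{-\Lambda s}\Sigma e^{-\Lambda^T s}$, and this is positive semidefinite for all $s$ only if $\Lambda\Sigma+\Sigma\Lambda^T\geq 0$ --- a condition nowhere assumed (and which can fail even when $\Lambda$ is diagonal with positive entries and $\Sigma$ is a genuine covariance; try $\Lambda=\mathrm{diag}(1,2)$, $\Sigma=\begin{pmatrix}1&1\\1&1\end{pmatrix}$). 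Second, even when the semigroup exists, the bound $M_2(f_g)\leq \tfrac12\|\Lambda^{-1}\|_{\rm op}M_2(g)$ relies on $\int_0^\infty\|e^{-\Lambda s}\|_{\rm op}^2\,ds=\tfrac12\|\Lambda^{-1}\|_{\rm op}$, which is false for non-normal $\Lambda$ (a $2\times 2$ Jordan block already breaks it). Meckes' exchangeability trick sidesteps both issues by never asking the Stein solution to absorb $\Lambda$. Your plan would go through cleanly if you additionally assumed $\Lambda=\lambda I_d$ (or at least $\Lambda$ symmetric positive definite and commuting with $\Sigma$), but as stated the proposition allows arbitrary invertible $\Lambda$.
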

\begin{remark}{\rm
Constant in (2) is different from Meckes' paper \cite{Meckes09} . We took this better constant from Christian D\"obler's dissertation \cite{Doebler}, see page 114 therein. 
}
\end{remark}

By combining the previous proposition with our exchangeable pairs, we get the following result, whose point 2 corresponds to (\ref{NPR-exch}). 
 
 \begin{theorem}   Fix $d\geq 2$ and  $1\leq p_1 \leq \ldots \leq p_d$. Consider a vector $F: = \big( I^B_{p_1}(f_1), \ldots, I^B_{p_d}(f_d) \big)$ with $f_i\in L^2\big( [0,1]^{p_i}\big)$ symmetric for each $i\in\{1,\ldots,d\}$.
 Let $\Sigma=(\sigma_{ij})$ be the covariance matrix of $F$, and $N \sim N_d(0, \Sigma)$. Then 
 \begin{itemize}
\item[(1)] for $g\in C^2(\R^d)$, 
\begin{eqnarray*}
\Big| E  [ g(F) ] - E [g(N)  ] \Big| \leq  \frac{  \sqrt{d} \,M_2(g)  }{2p_1} \,\sqrt{   \sum_{i,j=1}^d  {\rm Var}\Big(  
p_ip_j\int_0^1 I_{p_i-1}(f_i(x,\cdot))I_{p_j-1}(f_j(x,\cdot))dx
\Big)     },
\end{eqnarray*}
where $M_2(g) : = \sup_{x\in\R^d} \big\| D^2 g(x) \big\|_\text{op}$.
\item[(2)] if in addition, $\Sigma$ is positive definite, then 
\begin{eqnarray*}
d_\text{W} (F, N) \leq  \frac{ 2  \|  \Sigma^{-1/2} \| _\text{op} }{q_1\sqrt{2\pi}} \,\sqrt{   \sum_{i,j=1}^d  {\rm Var}\Big(  
p_ip_j\int_0^1 I_{p_i-1}(f_i(x,\cdot))I_{p_j-1}(f_j(x,\cdot))dx
\Big)     }.
\end{eqnarray*}
 \end{itemize}
 \end{theorem}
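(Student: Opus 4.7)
The plan is to apply the multivariate Meckes' theorem (Proposition \ref{Mec09}) with the exchangeable pair of Brownian motions $(B,B^t)$ from Section \ref{sec-exch}, setting $F_t := \big(I^{B^t}_{p_1}(f_1),\ldots,I^{B^t}_{p_d}(f_d)\big)$ and $\mathcal{G} := \sigma\{B\}$. Since $F$ and $F_t$ are obtained from $B$ and $B^t$ respectively by the same deterministic construction, the exchangeability of $(B,B^t)$ established in Lemma \ref{exchan-b} immediately passes to $(F,F_t)$.

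Next I verify the three assumptions of Proposition \ref{Mec09}. Condition (a) follows coordinatewise from Proposition \ref{azerty}(a), with $\Lambda=\mathrm{diag}(p_1,\ldots,p_d)$; since $p_1\leq\ldots\leq p_d$, $\|\Lambda^{-1}\|_{\mathrm{op}}=1/p_1$. Condition (c) is immediate from Remark \ref{A-hyper}(ii) applied to each component. For condition (b), I would use the polarized version of Proposition \ref{azerty}(b) — namely Proposition \ref{Bettembourg2}(b) together with the identification $2\langle DI^B_{p_i}(f_i),DI^B_{p_j}(f_j)\rangle = 2p_ip_j\int_0^1 I^B_{p_i-1}(f_i(x,\cdot))I^B_{p_j-1}(f_j(x,\cdot))\,dx$ — to obtain, for every $i,j$,
\begin{equation*}
\lim_{t\downarrow 0}\frac{1}{t}\,E\big[(F_{i,t}-F_i)(F_{j,t}-F_j)\,|\,\sigma\{B\}\big]=T_{ij},\quad T_{ij}:=2p_ip_j\int_0^1 I^B_{p_i-1}(f_i(x,\cdot))I^B_{p_j-1}(f_j(x,\cdot))\,dx .
\end{equation*}
One then sets $S_{ij}:=T_{ij}-2p_i\sigma_{ij}$, so that the above limit equals $2\Lambda\Sigma + S$ coefficient-wise. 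Here one must check that $2\Lambda\Sigma$ is symmetric despite $\Lambda$ having distinct diagonal entries: this works because $\sigma_{ij}=0$ whenever $p_i\neq p_j$ (orthogonality of distinct Wiener chaoses), so $2p_i\sigma_{ij}=2p_j\sigma_{ij}$ in all cases.

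A quick direct computation (using $E[I^B_{p_i-1}(g)I^B_{p_j-1}(h)]=0$ if $p_i\neq p_j$, and the Wiener--It\^o isometry if $p_i=p_j$) yields $E[T_{ij}]=2p_i\sigma_{ij}$, so $E[S_{ij}]=0$ and hence $E[S_{ij}^2]=\mathrm{Var}(T_{ij})=4\,\mathrm{Var}\bigl(p_ip_j\int_0^1 I^B_{p_i-1}(f_i(x,\cdot))I^B_{p_j-1}(f_j(x,\cdot))\,dx\bigr)$. Applying Jensen's inequality to the concave square-root then gives
\begin{equation*}
E\!\left[\sqrt{\sum_{i,j=1}^d S_{ij}^2}\right]\leq\sqrt{\sum_{i,j=1}^d E[S_{ij}^2]}=2\sqrt{\sum_{i,j=1}^d\mathrm{Var}\Big(p_ip_j\int_0^1 I^B_{p_i-1}(f_i(x,\cdot))I^B_{p_j-1}(f_j(x,\cdot))\,dx\Big)}.
\end{equation*}
Plugging this bound together with $\|\Lambda^{-1}\|_{\mathrm{op}}=1/p_1$ into Proposition \ref{Mec09}(1) and (2) yields precisely the two stated inequalities (the prefactors $\tfrac{\sqrt d\,M_2(g)}{2p_1}$ and $\tfrac{2\|\Sigma^{-1/2}\|_{\mathrm{op}}}{p_1\sqrt{2\pi}}$ appear after combining the constant $1/4$ (resp.\ $1/\sqrt{2\pi}$) from Meckes with the factor $2$ above).

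The main technical obstacle is really only the symmetry/mean-zero verification of the matrix $S$: all three limit conditions reduce to material already established (componentwise Proposition \ref{azerty} and the carr\'e du champ identity of Proposition \ref{Bettembourg2}). The key structural observation that makes the argument go through is that, although $\Lambda$ is not a multiple of the identity, the vanishing of $\sigma_{ij}$ across distinct chaoses rescues the symmetry of $2\Lambda\Sigma$ and hence allows the quantity $S$ to be legitimately defined.
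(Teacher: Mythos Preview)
Your proof is correct and follows essentially the same route as the paper: construct $F_t$ via the Mehler-type pair $(B,B^t)$, verify conditions (a)--(c) of Proposition~\ref{Mec09} with $\Lambda=\mathrm{diag}(p_1,\ldots,p_d)$, identify $S_{ij}=T_{ij}-2p_i\sigma_{ij}$, and close with Jensen. The only cosmetic difference is that for condition~(b) you invoke the polarized carr\'e-du-champ identity of Proposition~\ref{Bettembourg2}(b), whereas the paper redoes that computation directly from the product formula~(\ref{product}); your additional remark that $2\Lambda\Sigma$ is symmetric because $\sigma_{ij}=0$ across distinct chaoses is a nice clarification that the paper leaves implicit.
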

 \noindent
{\it Proof}. We consider $F_t =  \big( I^{B^t}_{p_1}(f_1), \ldots, I^{B^t}_{p_d}(f_d) \big)$, where $B^t$ is the Brownian motion constructed in Section \ref{sec-exch}. 
We deduce  from (\ref{cond-exp-Bett})   that 
\begin{eqnarray*}
\frac1t\,  E\big[ F_t - F |\sigma\{B\} \big] 
= \left(\frac{e^{-p_1t}-1}{t}\,I^{B^t}_{p_1}(f_1)
,\ldots,
\frac{e^{-p_dt}-1}{t}\,I^{B^t}_{p_d}(f_d)
\right)
\end{eqnarray*}
implying in turn that, in $L^2(\Omega)$ and as $t\downarrow 0$,
\begin{eqnarray*}
\frac1t\,  E\big[ F_t - F |\sigma\{B\} \big]  \to -\Lambda F,
\end{eqnarray*}
with $\Lambda = \text{diag}(p_1, \ldots, p_d)$ (in particular, $\| \Lambda^{-1} \| _\text{op} = p_1^{-1}$). That is, assumption (a) in Proposition \ref{Mec09} is satisfied (with $\mathcal{G}=\sigma\{B\}$).
That assumption (c) in Proposition \ref{Mec09} is satisfied as well follows from Proposition \ref{azerty}(c).  
Let us finally check that assumption (b)  in Proposition \ref{Mec09}  takes place too. First, using the product formula (\ref{product}) for multiple integrals 
with respect to $B^t$  (resp. $B$)   yields
\begin{eqnarray*}
 F_iF_j  &=& \sum_{r=0}^{p_i\wedge p_j }  r! {p_i\choose r}  {p_j\choose r}  I^B_{p_i+p_j-2r}\big( f_i \otimes_r f_j \big)\\
  F_{i,t}F_{j,t} & =& \sum_{r=0}^{p_i\wedge p_j }  r! {p_i\choose r}  {p_j\choose r}  I^{B^t}_{p_i+p_j-2r}\big( f_i \otimes_r f_j \big).
 \end{eqnarray*}
Hence, using (\ref{lili}) for passing to the limit,
\begin{eqnarray*}
&&\quad \frac1t \, E\big[ (F_{i,t} - F_i )  (F_{j,t}- F_j ) \big| \sigma\{B\} \big] - \frac1t \, E\big[ F_{i,t} F_{j,t}- F_iF_j  \big| \sigma\{B\} \big] \\
& =&   -\frac1t\, F_i \, E\big[ F_{j,t} - F_j | \sigma\{B\}\big]  -  \frac1t\,F_j \, E\big[ F_{i,t} - F_i \big| \sigma\{B\}\big] \\
& \to& (p_i+p_j) F_iF_j =\sum_{r=0}^{p_i\wedge p_j }  r! {p_i\choose r}  {p_j\choose r} (p+q)  I_{p_i+p_j-2r}^B\big( f_i \otimes_r f_j \big) \quad\mbox{as $t\downarrow 0$}.
 \end{eqnarray*}
Now, note  in $L^2(\Omega)$,
\begin{eqnarray*}
& \quad & \frac{1}{t} \, E\big[ F_{i,t}F_{j,t}- F_iF_j  \big| \sigma\{B\} \big] \\
 & =&  \sum_{r=0}^{p_i\wedge p_j }  r! {p_i\choose r}  {p_j\choose r}  \, \frac1t\, E\Big[  I^{B^t}_{p_i+p_j-2r}\big( f_i \otimes_r f_j \big)  - I^B_{p_i+p_j-2r}\big( f_i  \otimes_r f_j \big) \big| \sigma\{B\} \Big] \\
& \to &  \sum_{r=0}^{p_i\wedge p_j }  r! {p_i\choose r}  {p_j\choose r}  \,  (2r - p_i - p_j)  I_{p_i+p_j-2r}^B\big( f_i  \otimes_r f_j \big)  \,\, ,\quad \text{ as $t\downarrow 0$, by (\ref{lili})  .}
 \end{eqnarray*}
Thus, as $t\downarrow 0$,
\begin{eqnarray*}
\frac1t \, E\big[ (F_{i,t} - F_i )  (F_{j,t} - F_j ) \big| \sigma\{B\} \big]  &\to&  2 \sum_{r=1}^{p_i\wedge p_j }  r! r {p_i\choose r}  {p_j\choose r}   I^B_{p_i+p_j-2r}\big( f_i  \otimes_r f_j \big)   \\
&= &2p_ip_j \int_0^1 I_{p_i-1}^B(f_i(x,\cdot))I_{p_j-1}^B(f_j(x,\cdot))dx \, ,
\end{eqnarray*}
where the last equality follows from a straightforward application of the product formula (\ref{product}).  
As a result, if we set  $$
S_{ij} = 2p_ip_j\int_0^1 I_{p_i-1}(f_i(x,\cdot))I_{p_j-1}(f_j(x,\cdot))dx  - 2p_i \sigma_{ij}$$ for each $i,j\in\{1,\ldots,d\}$, then assumption (b) in Proposition \ref{Mec09} turns out to be satisfied as well.  By the isometry property (point 2 in Section \ref{sec1}), it is straightforward to check that
 \begin{eqnarray*}
p_j\int_0^1  E\Big[ I_{p_i-1}(f_i(x,\cdot))I_{p_j-1}(f_j(x,\cdot))\Big] dx  = \sigma_{ij} \,\, .
 \end{eqnarray*}
 Therefore, 
\begin{eqnarray*} 
 E \left[ \,\,  \sqrt{ \sum_{i,j=1}^d S_{ij}^2 } \,\, \right] \leq \sqrt{    \sum_{i,j=1}^d E\big[ S_{ij}^2 \big] } = 2  \sqrt{    \sum_{i,j=1}^d   {\rm Var}\Big(  p_ip_j\int_0^1 I_{p_i-1}(f_i(x,\cdot))I_{p_j-1}(f_j(x,\cdot))dx \Big)  }    \,\, \, .
 \end{eqnarray*} 
Hence the desired results in (1) and (2) follow from Proposition \ref{Mec09}.  \qed

\end{document}